\documentclass[11pt]{article}

\usepackage[a4paper,margin=1in]{geometry}
\usepackage[T1]{fontenc}
\usepackage[utf8]{inputenc}
\usepackage{lmodern}
\usepackage{microtype}
\usepackage{amsmath,amssymb,amsthm,mathtools}
\usepackage{graphicx}
\usepackage{booktabs}
\usepackage{array}
\usepackage{enumitem}
\usepackage{subcaption}
\usepackage{xcolor}
\usepackage{adjustbox}
\usepackage{hyperref}
\hypersetup{colorlinks=true,linkcolor=blue!50!black,citecolor=blue!50!black,urlcolor=blue!50!black}

\newtheorem{proposition}{Proposition}[section]
\newtheorem{theorem}[proposition]{Theorem}
\newtheorem{corollary}[proposition]{Corollary}
\newtheorem{lemma}[proposition]{Lemma}
\theoremstyle{definition}
\newtheorem{definition}[proposition]{Definition}
\newtheorem{problem}[proposition]{Problem}
\theoremstyle{remark}
\newtheorem{remark}[proposition]{Remark}
\newtheorem{observation}[proposition]{Observation}

\newcommand{\Ax}{\operatorname{Ax}}
\newcommand{\Sp}{\operatorname{Sp}}
\newcommand{\Fr}{\operatorname{Fr}}
\newcommand{\Sh}{\operatorname{Sh}}

\title{Directional Geometry and Anisotropy in the Partition Graph}
\author{Fedor B. Lyudogovskiy}
\date{}

\begin{document}
\maketitle

\begin{abstract}
We develop a directional formalism for the partition graph $G_n$ based on several canonical reference sets: the main chain, the self-conjugate axis, the spine, and the boundary framework. For each such set $S$, the graph distance $d_S$ induces a shell structure and a local trichotomy of edges into inward, outward, and level classes. Passing from edges to paths, we define directional corridors as monotone inward geodesics toward a chosen reference set and prove that every vertex admits at least one.

We then prove a structural non-equivalence theorem: for connected $G_n$, two nonempty reference sets induce the same edgewise directional field if and only if the difference of their distance functions is constant; in particular, distinct reference sets induce distinct directional fields. This provides a first precise formalization of anisotropy in $G_n$. We also show that every bounded neighborhood of a reference set is accessible by a monotone inward corridor, which gives a directional interpretation to previously established controlled regions around the axis, the spine, and the framework.

Finally, we complement the strict theory with a computational atlas illustrating edgewise directional statistics, directional mixing, local invariant drift, and corridor-based transport profiles.
\end{abstract}

\noindent\textbf{Keywords:} partition graph; integer partitions; directional geometry; anisotropy; graph distance; monotone geodesics; self-conjugate axis; spine; boundary framework; controlled neighborhoods

\noindent\textbf{MSC 2020:} 05C12, 05C75, 05A17

\section{Introduction}

The partition graph $G_n$ is the graph whose vertices are the integer partitions of $n$ and whose edges correspond to elementary unit transfers between parts, followed by reordering into nonincreasing form; see Section~2 for a precise definition. In the preceding papers of this series, several geometric structures have been identified in $G_n$: the main chain, the self-conjugate axis, central neighborhoods, simplex layers, the degree landscape, the spine, the boundary framework, and rear morphology~\cite{LyuGrowing,LyuAxial,LyuSimplex,LyuDegree}. A persistent informal theme behind these constructions is that $G_n$ is not isotropic. Certain directions or transport regimes appear to be privileged, while others play visibly different roles.

The aim of this paper is to formalize that intuition. We do not seek a global geometric classification, nor do we attempt to reduce all motion in $G_n$ to a single dominant field. Instead, we develop a strict directional language relative to several canonical reference sets already present in the morphology of $G_n$: the main chain $M_n$, the self-conjugate axis $\Ax_n$, the spine $\Sp_n$, and the boundary framework $\Fr_n$. For each such set $S$, the graph distance $d_S$ produces a shell decomposition of $G_n$ and a local trichotomy of edges into inward, outward, and level classes. Passing from edges to paths, we define directional corridors as monotone inward geodesics toward a chosen reference set.

At the formal level, the paper has three main structural components. First, relative to any nonempty reference set $S \subseteq V(G_n)$, the distance function $d_S$ induces a local trichotomy of edges and a corresponding shell decomposition of $G_n$. Second, every vertex admits an $S$-monotone inward geodesic to $S$, which provides a pathwise notion of directional corridor. Third, if $G_n$ is connected, two nonempty reference sets induce the same edgewise directional field if and only if the difference of their distance functions is constant; in particular, distinct reference sets induce distinct directional fields. Together with the controlled-access principle for bounded neighborhoods, these results yield a first strict framework for directional geometry and anisotropy in the partition graph.

The contribution of the paper is therefore conceptual and organizational rather than enumerative. The relevant background comes from the global homotopy-theoretic study of $\mathrm{Cl}(G_n)$ and from the companion papers on local morphology, large-scale morphology, axial morphology, simplex stratification, and the degree landscape~\cite{LyuClique,LyuLocal,LyuGrowing,LyuAxial,LyuSimplex,LyuDegree}. The present paper does not re-establish those results; rather, it studies the directional relations among these structures. In particular, once a distinguished region is known to lie in a bounded neighborhood of the axis, the spine, or the framework~\cite{LyuAxial,LyuGrowing}, the corridor formalism immediately converts that containment statement into a directional-access statement. This is the main bridge between the present work and the earlier morphology papers.

We also include a compact computational atlas for the tested range $8 \le n \le 12$. The strict theory developed in Sections~2--5 already implies that different reference sets define different directional fields, but it does not by itself quantify the resulting anisotropy. The computations therefore serve two auxiliary purposes: they make the shell geometry visually explicit, and they measure how different directional regimes interact with local invariants such as degree, local simplex dimension, support size, and height.

The paper is organized as follows. Section~2 fixes the canonical reference sets and the basic distance observables. Section~3 introduces edgewise directional classes and the shell-based trichotomy. Section~4 passes from edges to paths and establishes the existence of monotone inward geodesics, which we call directional corridors. Section~5 contains the main structural anisotropy result and the controlled-access principle for bounded neighborhoods. Section~6 presents a computational directional atlas for the tested range $8 \le n \le 12$. The final section summarizes the resulting picture and formulates several open problems.

\section{Canonical reference sets and directional observables}

We now recall the objects that will serve as reference sets in the present paper. Let $V(G_n)$ denote the set of partitions of $n$ written in nonincreasing form. For standard background on integer partitions and Ferrers diagrams, see Andrews and Stanley~\cite{Andrews,Stanley}.

\begin{definition}\label{def:adjacency}
Let $\lambda=(\lambda_1,\dots,\lambda_k) \vdash n$. Choose two distinct indices $i,j$ with $1 \le i,j \le k$ and $\lambda_j>0$. Replace $\lambda_i$ by $\lambda_i+1$ and $\lambda_j$ by $\lambda_j-1$, delete the latter if it becomes $0$, and reorder the resulting parts into nonincreasing form. If the resulting partition is $\mu \ne \lambda$, we say that $\mu$ is obtained from $\lambda$ by an \emph{elementary unit transfer}.

Two partitions $\lambda,\mu \vdash n$ are adjacent in $G_n$ if one is obtained from the other by a single elementary unit transfer.
\end{definition}

This is the same adjacency convention used throughout the earlier papers of the series; in particular, it is the convention underlying the local transfer language and the height function~\cite{LyuClique,LyuLocal}.

\subsection{Canonical reference sets}

The \emph{main chain} is the hook chain
\[
M_n=\{(n-k,1^k): 0 \le k \le n-1\},
\]
joining the two antennas $(n)$ and $(1^n)$.

The \emph{self-conjugate axis} is
\[
\Ax_n=\{\lambda \vdash n : \lambda=\lambda'\}.
\]

For the \emph{boundary framework}, we use, following the large-scale morphology paper~\cite{LyuGrowing},
\[
\Fr_n = M_n \cup L_n \cup R_n,
\]
where
\[
L_n=\{(n-k,k): 1 \le k \le \lfloor n/2 \rfloor\}
\]
is the left edge and $R_n=L_n'$ is its conjugate right edge.

The \emph{spine} $\Sp_n$ is taken in the thin sense defined in the axial-morphology paper~\cite{LyuAxial}. Write the self-conjugate partitions of $n$ in decreasing lexicographic order as
\[
\sigma_1,\sigma_2,\dots,\sigma_m.
\]
For each consecutive pair $(\sigma_i,\sigma_{i+1})$, choose a shortest path in $G_n$ connecting them that is lexicographically minimal among all such shortest paths. The spine $\Sp_n$ is the union of $\Ax_n$ with all vertices lying on these chosen paths. This choice is not claimed to be the only reasonable one, but it is sufficiently canonical for the present directional theory and for the computations in Section~6.

\subsection{Distance observables, shells, and neighborhoods}

Let $S \subseteq V(G_n)$ be nonempty. We write
\[
d_S(\lambda)=\min_{\sigma \in S} d_{G_n}(\lambda,\sigma)
\]
for the graph distance from $\lambda$ to $S$.

For the four canonical reference sets we use the shorthand notation
\[
d_M,\qquad d_{\Ax},\qquad d_{\Sp},\qquad d_{\Fr}.
\]

\begin{definition}
Let $S \subseteq V(G_n)$ be nonempty and let $r \ge 0$. The $r$-th shell around $S$ is
\[
\Sh_S^{(r)}=\{\lambda \in V(G_n): d_S(\lambda)=r\},
\]
and the closed $r$-neighborhood of $S$ is
\[
N_S^{(\le r)}=\{\lambda \in V(G_n): d_S(\lambda)\le r\}.
\]
\end{definition}

We also use the height function
\[
h(\lambda)=\sum_{i \ge 1} i \lambda_i
\]
and the support size
\[
s(\lambda)=\ell(\lambda),
\]
that is, the number of positive parts of $\lambda$. These observables already play an important role in the local and degree-theoretic morphology of $G_n$~\cite{LyuLocal,LyuDegree}, and they will reappear in Section~6 as part of the directional drift profile.

\begin{remark}
The present paper uses \emph{relative} directional language. There is no absolute notion of inward motion. Instead, we speak about motion toward $M_n$, $\Ax_n$, $\Sp_n$, or $\Fr_n$, depending on which distance observable is being used.
\end{remark}

\subsection{Connectivity}

The directional-equivalence theorem in Section~5 relies on the fact that $G_n$ is connected.

\begin{proposition}\label{prop:Gn-connected}
For every $n \ge 1$, the partition graph $G_n$ is connected.
\end{proposition}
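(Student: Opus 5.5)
We show that every partition $\lambda \vdash n$ is joined by a path in $G_n$ to the one-part partition $(n)$. Since adjacency is symmetric by Definition~\ref{def:adjacency}, this gives connectivity: any two vertices are joined through $(n)$. For $n=1$ the graph has a single vertex, so there is nothing to prove. We therefore assume $n\ge 2$.

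The argument is an induction on the number of parts $\ell(\lambda)$, or equivalently a descent on the statistic $n-\lambda_1$. Let $\lambda=(\lambda_1,\dots,\lambda_k)$ with $k\ge 2$. Apply the elementary unit transfer with $i=1$ and $j=k$: we replace $\lambda_1$ by $\lambda_1+1$ and $\lambda_k$ by $\lambda_k-1$, deleting the last part if it becomes $0$.

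We first check that this is a legitimate edge. The result $\mu$ differs from $\lambda$ because its largest part equals $\lambda_1+1>\lambda_1$. Reordering is harmless: the new first part is still the maximum, and decreasing the smallest part keeps the tail nonincreasing. Thus $\mu$ is adjacent to $\lambda$ and satisfies $\mu_1=\lambda_1+1$.

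Iterating this move strictly increases the largest part at each step. The largest part is bounded by $n$, and it equals $n$ exactly when the partition is $(n)$. Hence after $n-\lambda_1$ steps we reach $(n)$. This yields an explicit path of length $n-\lambda_1$ from $\lambda$ to $(n)$.

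Concretely, the path passes through the partitions obtained by repeatedly moving one unit from the last part onto the first. For example, $(2,2,1)\to(3,2)\to(4,1)\to(5)$.

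There is no real obstacle here. The only point requiring care is the verification in the third paragraph that the move is a genuine edge under Definition~\ref{def:adjacency}: the indices are distinct, $\lambda_k>0$, and $\mu\neq\lambda$. The case $k=1$ is exactly the terminal vertex $(n)$.
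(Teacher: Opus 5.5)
Your proof is correct and takes the same approach as the paper, which also repeatedly moves one unit from the smallest part onto the largest. The paper tracks the drop in the number of parts, while you track the increase of $\lambda_1$; both trace the same path of length $n-\lambda_1$ to $(n)$, and your explicit check that each move is a genuine edge ($i\neq j$, $\lambda_k>0$, $\mu\neq\lambda$, no reordering issue) is slightly more careful than the paper's version.
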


\begin{proof}
Let $\lambda=(\lambda_1,\dots,\lambda_k) \vdash n$, with $\lambda_1 \ge \cdots \ge \lambda_k > 0$. If $k=1$, then $\lambda=(n)$.

Assume $k \ge 2$. Choose the smallest part $\lambda_k$, and move its units one by one to the largest part $\lambda_1$. Each such step is an elementary unit transfer between two parts, followed by reordering, and therefore corresponds to an edge of $G_n$. After $\lambda_k$ transfers, the smallest part disappears, so the number of parts decreases by $1$.

Repeating the procedure, we eventually reach the one-part partition $(n)$. Thus every vertex is connected to $(n)$, and $G_n$ is connected.
\end{proof}

\section{Directional classes of moves}

We now formalize direction at the level of individual edges of $G_n$.

\subsection{Edgewise direction relative to a reference set}

\begin{proposition}\label{prop:S-trichotomy}
Let $S \subseteq V(G_n)$ be nonempty, and let $\lambda \sim \mu$ in $G_n$. Then
\[
|d_S(\lambda)-d_S(\mu)| \le 1.
\]
Equivalently, exactly one of the following three cases holds:
\[
d_S(\mu)=d_S(\lambda)-1,\qquad
d_S(\mu)=d_S(\lambda),\qquad
d_S(\mu)=d_S(\lambda)+1.
\]
\end{proposition}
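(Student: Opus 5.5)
The plan is to derive the estimate from the triangle inequality for the graph metric $d_{G_n}$, which is a genuine finite metric because $G_n$ is connected by Proposition~\ref{prop:Gn-connected}. In particular $d_S(\lambda)$ is a finite nonnegative integer for every vertex $\lambda$, being a minimum over the nonempty finite set $S$.

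First, bound $d_S(\mu)$ from above in terms of $d_S(\lambda)$. Choose $\sigma\in S$ attaining the minimum, so that $d_{G_n}(\lambda,\sigma)=d_S(\lambda)$. Since $\lambda\sim\mu$, we have $d_{G_n}(\lambda,\mu)=1$; note $\lambda\neq\mu$ because Definition~\ref{def:adjacency} requires $\mu\ne\lambda$. The triangle inequality then gives
\[
d_S(\mu)\le d_{G_n}(\mu,\sigma)\le d_{G_n}(\mu,\lambda)+d_{G_n}(\lambda,\sigma)=1+d_S(\lambda).
\]

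Second, adjacency in $G_n$ is symmetric, since the graph is undirected by definition: one vertex is obtained from the other by a transfer. Exchanging the roles of $\lambda$ and $\mu$ therefore yields $d_S(\lambda)\le d_S(\mu)+1$. Together the two inequalities give $|d_S(\lambda)-d_S(\mu)|\le 1$.

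For the equivalent trichotomy, observe that $d_S(\mu)-d_S(\lambda)$ is an integer of absolute value at most $1$. It therefore lies in $\{-1,0,1\}$, and these three values are mutually exclusive, which gives exactly one of the three listed cases.

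There is no real obstacle here. The only points needing care are finiteness of $d_S$, which is supplied by connectivity and $S\neq\emptyset$, and symmetry of adjacency. Without connectivity the argument would still hold on each component, with the convention that $\infty-\infty$ is excluded.
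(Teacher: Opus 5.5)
Your proof is correct and follows essentially the same route as the paper: pick a nearest $\sigma\in S$ to $\lambda$, use the triangle inequality through the edge $\lambda\mu$ to get $d_S(\mu)\le d_S(\lambda)+1$, and conclude by symmetry. Your added remarks (finiteness of $d_S$ via connectivity, and the integrality argument for the trichotomy) make explicit points the paper leaves implicit, but do not change the argument.
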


\begin{proof}
Choose $\sigma \in S$ such that $d_S(\lambda)=d(\lambda,\sigma)$. Since $\lambda \sim \mu$,
\[
d(\mu,\sigma)\le d(\lambda,\sigma)+1=d_S(\lambda)+1,
\]
hence
\[
d_S(\mu)\le d_S(\lambda)+1.
\]
By symmetry,
\[
d_S(\lambda)\le d_S(\mu)+1.
\]
Therefore $|d_S(\lambda)-d_S(\mu)|\le 1$.
\end{proof}

\begin{definition}
Let $S \subseteq V(G_n)$ be nonempty, and let $\lambda \sim \mu$.
\begin{enumerate}[label=(\roman*)]
\item The edge $\lambda\mu$ is \emph{$S$-inward at $\lambda$} if
\[
d_S(\mu)=d_S(\lambda)-1.
\]
\item It is \emph{$S$-outward at $\lambda$} if
\[
d_S(\mu)=d_S(\lambda)+1.
\]
\item It is \emph{$S$-level at $\lambda$} if
\[
d_S(\mu)=d_S(\lambda).
\]
\end{enumerate}
\end{definition}

\begin{corollary}\label{cor:shell-adjacency}
Let
\[
\Sh_S^{(r)}=\{\lambda \in V(G_n): d_S(\lambda)=r\}.
\]
Then every edge of $G_n$ either lies inside a single shell $\Sh_S^{(r)}$ or joins two adjacent shells
$\Sh_S^{(r)}$ and $\Sh_S^{(r+1)}$.
\end{corollary}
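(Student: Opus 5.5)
This is a direct consequence of Proposition~\ref{prop:S-trichotomy}, and the plan is simply to read off the shell membership of the two endpoints of an arbitrary edge.

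Fix an edge $\lambda\mu$ of $G_n$ and set $r=\min\{d_S(\lambda),d_S(\mu)\}$. First I note that $d_S$ takes finite values on all of $V(G_n)$. This holds because $S$ is nonempty and $G_n$ is connected by Proposition~\ref{prop:Gn-connected}. Consequently the shells $\Sh_S^{(r)}$, $r\ge 0$, partition $V(G_n)$, and every vertex lies in exactly one shell.

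Next I apply Proposition~\ref{prop:S-trichotomy} to the adjacent pair $\lambda\sim\mu$, which gives $|d_S(\lambda)-d_S(\mu)|\le 1$. Since both values are integers, their difference is $0$ or $\pm1$. I then split into two cases.
\begin{enumerate}[label=(\roman*)]
\item If the difference is $0$, then $d_S(\lambda)=d_S(\mu)=r$, so both endpoints lie in $\Sh_S^{(r)}$ and the edge lies inside a single shell.
\item If the difference is $\pm 1$, then after possibly swapping the roles of $\lambda$ and $\mu$ one endpoint has value $r$ and the other has value $r+1$. So the edge joins $\Sh_S^{(r)}$ to $\Sh_S^{(r+1)}$.
\end{enumerate}

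The argument has no real obstacle. The only point needing care is the finiteness of $d_S$, which guarantees that every vertex belongs to some shell; this is where connectivity enters. The dichotomy in the statement is exclusive, since an edge cannot both lie in one shell and join two distinct shells. This corresponds exactly to the level case versus the inward/outward cases of the preceding definition.
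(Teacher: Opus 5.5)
Your proof is correct and follows the paper's, which simply declares the corollary immediate from Proposition~\ref{prop:S-trichotomy}. Your remark that $d_S$ is finite on all of $V(G_n)$, by Proposition~\ref{prop:Gn-connected}, spells out a point the paper leaves implicit.
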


\begin{proof}
Immediate from Proposition~\ref{prop:S-trichotomy}.
\end{proof}

\subsection{Tangential and transverse motion}

\begin{definition}
Let $S \subseteq V(G_n)$ be nonempty.
\begin{enumerate}[label=(\roman*)]
\item An edge $\lambda\mu$ is \emph{internal to $S$} if $\lambda,\mu \in S$.
\item It is \emph{transverse to the $S$-shell structure} if $d_S(\lambda)\ne d_S(\mu)$.
\item It is \emph{tangential to the $S$-shell structure} if $d_S(\lambda)=d_S(\mu)$.
\end{enumerate}
\end{definition}

\begin{proposition}\label{prop:tangential-transverse}
Let $S \subseteq V(G_n)$ be nonempty.
\begin{enumerate}[label=(\roman*)]
\item Every edge internal to $S$ is $S$-level.
\item Every $S$-inward or $S$-outward edge is transverse to the $S$-shell structure.
\item Every $S$-level edge is tangential to the $S$-shell structure.
\end{enumerate}
\end{proposition}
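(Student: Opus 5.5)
The plan is to unwind the definitions directly; nothing beyond the definitions of $d_S$ and of the three edge classes is needed, together with Proposition~\ref{prop:S-trichotomy} for context.

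For (i), I would let $\lambda\mu$ be an edge with $\lambda,\mu\in S$. Every vertex of $S$ has distance $0$ to itself, and distances are nonnegative, so $d_S(\lambda)=0=d_S(\mu)$. By definition the edge is then $S$-level. I would note explicitly that this holds at either endpoint, since the level condition is symmetric in $\lambda$ and $\mu$.

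For (ii), if the edge is $S$-inward at $\lambda$, then $d_S(\mu)=d_S(\lambda)-1\neq d_S(\lambda)$. If it is $S$-outward at $\lambda$, then $d_S(\mu)=d_S(\lambda)+1\neq d_S(\lambda)$. In either case $d_S(\lambda)\ne d_S(\mu)$, which is exactly the definition of being transverse. I would also remark that being inward at $\lambda$ is the same as being outward at $\mu$, so the property "transverse" does not depend on which endpoint is chosen.

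For (iii), the $S$-level condition $d_S(\mu)=d_S(\lambda)$ is literally the definition of tangential, so there is nothing to prove. Optionally I would add the converse remarks that follow from Proposition~\ref{prop:S-trichotomy}: transverse edges are exactly the inward/outward ones, and tangential edges are exactly the level ones.

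There is no real obstacle here. The only point needing care is in (i), namely justifying $d_S=0$ on $S$ from $d(\sigma,\sigma)=0$ and nonnegativity.
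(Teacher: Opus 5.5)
Your proposal is correct and matches the paper's proof: for (i) you use $d_S=0$ on $S$, and (ii) and (iii) follow directly from the definitions. Your extra remarks on endpoint symmetry and on the converses are accurate but not needed.
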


\begin{proof}
For (i), if $\lambda,\mu \in S$, then $d_S(\lambda)=d_S(\mu)=0$, so the edge is $S$-level. Statements (ii) and (iii) are immediate from the definitions.
\end{proof}

\begin{remark}
Tangential motion and internal motion should not be conflated. An edge can be tangential to the $S$-shell structure without being internal to $S$. For instance, this occurs when both endpoints lie in the same positive shell $\Sh_S^{(r)}$ with $r>0$.
\end{remark}

\subsection{Named directional classes and combined signatures}

We shall speak of \emph{axial}, \emph{spinal}, \emph{chain}, and \emph{framework} inward/outward/level motion according to whether $S$ equals $\Ax_n$, $\Sp_n$, $M_n$, or $\Fr_n$.

It is convenient to record several directional fields simultaneously.

\begin{definition}
For an oriented edge $(\lambda,\mu)$, define the combined directional signature
\[
\Sigma(\lambda,\mu)=\bigl(\sigma_M(\lambda,\mu),\sigma_{\Ax}(\lambda,\mu),\sigma_{\Sp}(\lambda,\mu),\sigma_{\Fr}(\lambda,\mu)\bigr),
\]
where
\[
\sigma_S(\lambda,\mu)=d_S(\mu)-d_S(\lambda)\in \{-1,0,+1\}.
\]
\end{definition}

\begin{definition}
An oriented edge is \emph{directionally coherent} if all nonzero components of its combined directional signature have the same sign. Otherwise it is called \emph{directionally mixed}.
\end{definition}

\begin{remark}
The same edge may be inward relative to one reference set, level relative to another, and outward relative to a third. This already shows that the directional geometry of $G_n$ is not governed by a single field.
\end{remark}

\section{Monotone paths and directional corridors}

The edgewise language of Section~3 gives rise to a natural path geometry relative to any chosen reference set $S$.

\subsection{Monotone paths}

\begin{definition}
A path
\[
P=(\lambda_0,\lambda_1,\dots,\lambda_m)
\]
in $G_n$ is called
\begin{enumerate}[label=(\roman*)]
\item \emph{$S$-monotone inward} if
\[
d_S(\lambda_{i+1})=d_S(\lambda_i)-1 \qquad (0 \le i < m);
\]
\item \emph{$S$-monotone outward} if
\[
d_S(\lambda_{i+1})=d_S(\lambda_i)+1 \qquad (0 \le i < m);
\]
\item \emph{$S$-level} if
\[
d_S(\lambda_{i+1})=d_S(\lambda_i) \qquad (0 \le i < m).
\]
\end{enumerate}
\end{definition}

\begin{lemma}\label{lem:monotone-length}
Let
\[
P=(\lambda_0,\lambda_1,\dots,\lambda_m)
\]
be an $S$-monotone inward path. Then
\[
d_S(\lambda_i)=d_S(\lambda_0)-i \qquad (0 \le i \le m).
\]
In particular,
\[
m \le d_S(\lambda_0).
\]
If moreover $\lambda_m \in S$, then
\[
m=d_S(\lambda_0),
\]
and the path is a geodesic from $\lambda_0$ to $S$.
\end{lemma}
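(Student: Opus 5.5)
The plan is to prove the three assertions of Lemma~\ref{lem:monotone-length} in order, each following directly from the previous one.

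\emph{Step 1: the formula $d_S(\lambda_i)=d_S(\lambda_0)-i$.} I will argue by induction on $i$. The case $i=0$ is trivial. For the inductive step, the defining condition of an $S$-monotone inward path gives $d_S(\lambda_{i+1})=d_S(\lambda_i)-1$. Combined with the inductive hypothesis this yields $d_S(\lambda_{i+1})=d_S(\lambda_0)-(i+1)$. Equivalently, the identity follows by telescoping the sum of the increments $d_S(\lambda_{j+1})-d_S(\lambda_j)=-1$ over $0\le j<i$.

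\emph{Step 2: the bound $m\le d_S(\lambda_0)$.} Distances to $S$ are nonnegative, since $S$ is nonempty and $G_n$ is connected by Proposition~\ref{prop:Gn-connected}, so $d_S$ is a finite nonnegative integer. Applying Step~1 with $i=m$ gives $0\le d_S(\lambda_m)=d_S(\lambda_0)-m$, which is exactly $m\le d_S(\lambda_0)$.

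\emph{Step 3: the case $\lambda_m\in S$.} Here $d_S(\lambda_m)=0$, so Step~1 gives $m=d_S(\lambda_0)$. The path $P$ joins $\lambda_0$ to the vertex $\lambda_m\in S$ and has length $m$. Hence
\[
d_{G_n}(\lambda_0,\lambda_m)\le m=d_S(\lambda_0)\le d_{G_n}(\lambda_0,\lambda_m),
\]
where the last inequality is the definition of $d_S$ as a minimum over $S$. Equality therefore holds throughout. So $P$ is a shortest path from $\lambda_0$ to $\lambda_m$, and its length realizes the distance from $\lambda_0$ to $S$; that is, $P$ is a geodesic from $\lambda_0$ to $S$.

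There is no genuine obstacle. The only point needing care is the one in Step~2: nonnegativity and finiteness of $d_S$ rely on $S\neq\emptyset$ and on connectivity.
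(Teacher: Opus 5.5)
Your proposal is correct and follows essentially the same route as the paper. The paper also uses induction for the identity, nonnegativity of $d_S$ for the bound, and $d_S(\lambda_m)=0$ for the equality case. Your sandwich $d_{G_n}(\lambda_0,\lambda_m)\le m=d_S(\lambda_0)\le d_{G_n}(\lambda_0,\lambda_m)$ just spells out the paper's remark that no path from $\lambda_0$ to $S$ is shorter than $d_S(\lambda_0)$. One small correction: connectivity only guarantees that $d_S$ is finite; nonnegativity holds regardless, and this does not affect your argument.
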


\begin{proof}
The identity follows by induction on $i$. Since distances are nonnegative, $d_S(\lambda_0)-i \ge 0$, so $i \le d_S(\lambda_0)$ for all $i$, and in particular $m \le d_S(\lambda_0)$. If $\lambda_m \in S$, then $d_S(\lambda_m)=0$, so $0=d_S(\lambda_0)-m$, hence $m=d_S(\lambda_0)$. No path from $\lambda_0$ to $S$ can have length smaller than $d_S(\lambda_0)$.
\end{proof}

\begin{lemma}\label{lem:descent-neighbor}
Let $\lambda \in V(G_n)$ with $d_S(\lambda)>0$. Then $\lambda$ has a neighbor $\mu$ such that
\[
d_S(\mu)=d_S(\lambda)-1.
\]
\end{lemma}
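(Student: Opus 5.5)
The plan is to take a shortest path from $\lambda$ to $S$ and let $\mu$ be its second vertex. The existence of such a path rests on connectivity. By Proposition~\ref{prop:Gn-connected}, $G_n$ is connected, so $d_S(\lambda)=r$ is a finite positive integer, since $S$ is nonempty and $\lambda\notin S$. Pick $\sigma\in S$ with $d(\lambda,\sigma)=r$, and a shortest path
\[
\lambda=\nu_0,\nu_1,\dots,\nu_r=\sigma
\]
in $G_n$. Because $r\ge 1$, the vertex $\nu_1$ exists and is a neighbor of $\lambda$; set $\mu=\nu_1$.

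First I will bound $d_S(\mu)$ from above. The subpath $\nu_1,\dots,\nu_r$ joins $\mu$ to $\sigma\in S$ and has length $r-1$. Hence $d_S(\mu)\le d(\mu,\sigma)\le r-1$.

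Next I will bound $d_S(\mu)$ from below. By Proposition~\ref{prop:S-trichotomy}, $\lambda\sim\mu$ gives $d_S(\mu)\ge d_S(\lambda)-1=r-1$. Alternatively, the triangle inequality does the same job: any $\tau\in S$ satisfies $d(\lambda,\tau)\le 1+d(\mu,\tau)$.

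Combining the two bounds gives $d_S(\mu)=r-1$, which is the claim. There is no real obstacle here. The only points needing care are that connectivity makes $d_S(\lambda)$ finite, so that a realizing $\sigma$ and a geodesic exist, and that the hypothesis $d_S(\lambda)>0$ guarantees the geodesic has at least one edge.
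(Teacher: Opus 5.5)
Your proof is correct and follows the paper's argument: you take the second vertex of a shortest path to a nearest point of $S$, bound $d_S(\mu)\le r-1$ via the tail of that path, and get the reverse inequality from Proposition~\ref{prop:S-trichotomy}. Your explicit appeal to connectivity, to make sure $d_S(\lambda)$ is finite and a geodesic exists, is a point of care that the paper leaves implicit.
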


\begin{proof}
Choose $\sigma \in S$ such that $d(\lambda,\sigma)=d_S(\lambda)$, and let
\[
\lambda=\lambda_0,\lambda_1,\dots,\lambda_r=\sigma
\]
be a shortest path from $\lambda$ to $\sigma$, where $r=d_S(\lambda)$. Then $\lambda_1$ is adjacent to $\lambda$, and
\[
d_S(\lambda_1)\le d(\lambda_1,\sigma)=r-1=d_S(\lambda)-1.
\]
By Proposition~\ref{prop:S-trichotomy},
\[
d_S(\lambda_1)\ge d_S(\lambda)-1.
\]
Hence $d_S(\lambda_1)=d_S(\lambda)-1$.
\end{proof}

\begin{proposition}\label{prop:monotone-geodesic}
For every vertex $\lambda \in V(G_n)$, there exists an $S$-monotone inward path from $\lambda$ to $S$ of length $d_S(\lambda)$. Equivalently, every vertex admits an $S$-monotone inward geodesic to $S$.
\end{proposition}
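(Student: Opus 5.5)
The plan is to argue by induction on $r=d_S(\lambda)$, building the path one step at a time from Lemma~\ref{lem:descent-neighbor} and then identifying it as a geodesic with Lemma~\ref{lem:monotone-length}. First note that $d_S(\lambda)$ is a finite nonnegative integer for every $\lambda$. This holds because $S$ is nonempty and $G_n$ is connected by Proposition~\ref{prop:Gn-connected}, so some path from $\lambda$ to a point of $S$ exists.

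\emph{Base case.} If $r=0$, then $\lambda\in S$, and the trivial path $(\lambda)$ of length $0$ is vacuously $S$-monotone inward and ends in $S$.

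\emph{Inductive step.} Suppose $r>0$ and that the claim holds for all vertices at distance $r-1$ from $S$.
\begin{enumerate}[label=(\arabic*)]
\item Lemma~\ref{lem:descent-neighbor} gives a neighbor $\mu$ of $\lambda$ with $d_S(\mu)=r-1$.
\item By the inductive hypothesis, there is an $S$-monotone inward path $(\mu=\mu_0,\mu_1,\dots,\mu_{r-1})$ with $\mu_{r-1}\in S$.
\item Prepending $\lambda$ gives $(\lambda,\mu_0,\dots,\mu_{r-1})$. The first step lowers $d_S$ by exactly one, and each later step does too by hypothesis, so the concatenation is $S$-monotone inward of length $r$.
\end{enumerate}
The vertices are automatically distinct, since $d_S$ strictly decreases along the path. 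So the concatenation really is a path, with no repeated vertices.

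Finally, Lemma~\ref{lem:monotone-length} applies because the path ends in $S$. It gives that the length equals $d_S(\lambda)$ and that the path is a geodesic from $\lambda$ to $S$. This also yields the stated equivalence: an $S$-monotone inward path to $S$ of length $d_S(\lambda)$ is the same thing as an $S$-monotone inward geodesic to $S$.

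There is no genuine obstacle here; the substantive content is already in Lemma~\ref{lem:descent-neighbor}. The only point to watch is finiteness of $d_S$, which is where connectivity is used. Without it the induction could not start for vertices in components that miss $S$.
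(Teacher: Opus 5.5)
Your proof is correct and follows the paper's argument. You apply Lemma~\ref{lem:descent-neighbor} repeatedly to descend one shell at a time until reaching $S$, then invoke Lemma~\ref{lem:monotone-length} to conclude the path is a geodesic; your formal induction and your remark that finiteness of $d_S$ rests on connectivity (Proposition~\ref{prop:Gn-connected}) make explicit what the paper's iterative construction leaves implicit.
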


\begin{proof}
If $d_S(\lambda)=0$, there is nothing to prove. Otherwise Lemma~\ref{lem:descent-neighbor} gives a neighbor $\lambda_1$ with $d_S(\lambda_1)=d_S(\lambda)-1$. Repeating the argument inductively yields a path
\[
\lambda=\lambda_0,\lambda_1,\dots,\lambda_r
\]
such that
\[
d_S(\lambda_i)=d_S(\lambda)-i \qquad (0 \le i \le r).
\]
The process stops when $d_S(\lambda_r)=0$, that is, when $\lambda_r \in S$. Then $r=d_S(\lambda)$, and Lemma~\ref{lem:monotone-length} shows that the path is geodesic.
\end{proof}

\begin{corollary}\label{cor:shortest-to-S}
Let
\[
P=(\lambda_0,\lambda_1,\dots,\lambda_r)
\]
be any path from $\lambda_0$ to a vertex of $S$ of length $r=d_S(\lambda_0)$. Then $P$ is $S$-monotone inward.
\end{corollary}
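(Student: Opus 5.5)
The plan is to show that along such a path $P$ the function $d_S$ drops by exactly one at each step. I will sandwich $d_S(\lambda_i)$ between two bounds, one coming from the remaining length of $P$ and one coming from the edgewise trichotomy, and check that the two bounds coincide.

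\emph{Upper bound.} Fix an index $i$ with $0 \le i \le r$. The tail $(\lambda_i,\dots,\lambda_r)$ is a walk of length $r-i$ from $\lambda_i$ to $\lambda_r \in S$. Hence
\[
d_S(\lambda_i) \le d(\lambda_i,\lambda_r) \le r-i = d_S(\lambda_0)-i .
\]

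\emph{Lower bound.} Proposition~\ref{prop:S-trichotomy} says that consecutive vertices of $P$ differ in $d_S$ by at most one. Inducting along the initial segment $(\lambda_0,\dots,\lambda_i)$ therefore gives
\[
d_S(\lambda_i) \ge d_S(\lambda_0)-i .
\]

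\emph{Conclusion.} Combining the two bounds yields $d_S(\lambda_i)=d_S(\lambda_0)-i$ for every $0 \le i \le r$. Subtracting consecutive values gives $d_S(\lambda_{i+1})=d_S(\lambda_i)-1$ for $0 \le i < r$, which is exactly the definition of an $S$-monotone inward path.

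I do not expect any real obstacle. The only point needing care is that the upper bound uses the tail of $P$, while the lower bound uses the head of $P$ together with the $1$-Lipschitz property of Proposition~\ref{prop:S-trichotomy}. I will also note that in the degenerate case $r=0$ the path is a single vertex and is trivially monotone.
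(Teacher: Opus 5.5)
Your proof is correct and matches the paper's argument: both use the tail of $P$ to get the upper bound $d_S(\lambda_i)\le r-i$, and the initial segment of length $i$ to get the matching lower bound. The only cosmetic difference is that the paper writes the lower bound directly as $d_S(\lambda_0)\le i+d_S(\lambda_i)$, whereas you obtain it by iterating the one-step bound of Proposition~\ref{prop:S-trichotomy}; this is the same fact.
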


\begin{proof}
Since $P$ has length $d_S(\lambda_0)$ and ends in $S$, it is a geodesic from $\lambda_0$ to $S$. For each $i$ we have
\[
d_S(\lambda_i)\le r-i,
\]
because the tail
\[
\lambda_i,\lambda_{i+1},\dots,\lambda_r
\]
connects $\lambda_i$ to $S$ in $r-i$ steps. On the other hand, the initial segment
\[
\lambda_0,\lambda_1,\dots,\lambda_i
\]
has length $i$, so
\[
d_S(\lambda_0)\le i+d_S(\lambda_i).
\]
Since $r=d_S(\lambda_0)$, it follows that
\[
r\le i+d_S(\lambda_i),
\]
and hence
\[
d_S(\lambda_i)=r-i
\qquad (0\le i\le r).
\]
Therefore
\[
d_S(\lambda_{i+1})=d_S(\lambda_i)-1
\qquad (0\le i<r),
\]
so $P$ is $S$-monotone inward.
\end{proof}

\subsection{Directional corridors and pathwise transport}

\begin{definition}
An \emph{$S$-directional corridor} is an $S$-monotone inward geodesic. In particular, we speak of axial, spinal, chain, and framework corridors when $S=\Ax_n$, $\Sp_n$, $M_n$, and $\Fr_n$, respectively.
\end{definition}

\begin{remark}
No uniqueness is claimed. A given vertex may admit many different directional corridors to the same reference set.
\end{remark}

\begin{definition}
Let $P=(\lambda_0,\dots,\lambda_m)$ be a path in $G_n$.
\begin{enumerate}[label=(\roman*)]
\item $P$ is \emph{internal to $S$} if $\lambda_i \in S$ for all $i$;
\item $P$ is \emph{shell-tangential with respect to $S$} if it is $S$-level;
\item $P$ is \emph{shell-transverse with respect to $S$} if every edge of $P$ is $S$-inward or $S$-outward.
\end{enumerate}
\end{definition}

\begin{proposition}\label{prop:internal-vs-level-paths}
Let $P$ be a path in $G_n$.
\begin{enumerate}[label=(\roman*)]
\item If $P$ is internal to $S$, then $P$ is $S$-level.
\item Every $S$-directional corridor is shell-transverse with respect to $S$.
\end{enumerate}
\end{proposition}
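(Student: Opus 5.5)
Both parts follow directly from the definitions together with the edgewise results of Section~3, so the plan is to check each claim edge by edge.

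For (i), let $P=(\lambda_0,\dots,\lambda_m)$ be internal to $S$. Then every consecutive pair $\lambda_i,\lambda_{i+1}$ lies in $S$, so $\lambda_i\lambda_{i+1}$ is an edge internal to $S$. By Proposition~\ref{prop:tangential-transverse}(i), each such edge is $S$-level. Equivalently, $d_S(\lambda_i)=d_S(\lambda_{i+1})=0$ for all $0\le i<m$, which is precisely the defining condition for $P$ to be an $S$-level path. The degenerate case $m=0$ is vacuous and needs no separate treatment.

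For (ii), let $P=(\lambda_0,\dots,\lambda_r)$ be an $S$-directional corridor, that is, an $S$-monotone inward geodesic. By the definition of $S$-monotone inward, each step satisfies $d_S(\lambda_{i+1})=d_S(\lambda_i)-1$. Hence every edge $\lambda_i\lambda_{i+1}$ is $S$-inward at $\lambda_i$, and therefore $d_S(\lambda_i)\ne d_S(\lambda_{i+1})$. By Proposition~\ref{prop:tangential-transverse}(ii), each edge is transverse to the $S$-shell structure. Since every edge of $P$ is $S$-inward, $P$ is shell-transverse with respect to $S$ by definition. If instead a corridor is given only as a shortest path to $S$, I would first apply Corollary~\ref{cor:shortest-to-S} to obtain monotonicity, and then run the same argument.

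There is no real obstacle in either part. The only point requiring care is the unwinding of the path-level definitions (internal, level, and shell-transverse) into edgewise statements, so that the results of Section~3 can be applied.
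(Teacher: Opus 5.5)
Your proof is correct and follows the paper's argument. In (i) every vertex has $d_S=0$, so each edge is $S$-level; in (ii) every edge of a corridor is $S$-inward by definition, so the path is shell-transverse. The detour through Proposition~\ref{prop:tangential-transverse} and the fallback via Corollary~\ref{cor:shortest-to-S} are harmless but unnecessary, since a corridor is by definition already monotone inward.
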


\begin{proof}
If every vertex of $P$ belongs to $S$, then every vertex has $d_S=0$, so every edge is $S$-level. If $P$ is an $S$-directional corridor, then every edge is $S$-inward by definition.
\end{proof}

\begin{remark}
An $S$-level path need not be internal to $S$. This happens whenever some positive shell $\Sh_S^{(r)}$, $r>0$, contains a nontrivial path.
\end{remark}

\section{Anisotropy and interaction with established morphology}

The results of this section isolate a basic strict form of anisotropy in $G_n$.

\subsection{Height orientation}

\begin{lemma}\label{lem:height-changes-on-edges}
If $\lambda \sim \mu$ in $G_n$, then
\[
h(\lambda)\neq h(\mu).
\]
In particular, every edge of $G_n$ admits a unique height direction.
\end{lemma}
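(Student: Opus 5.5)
The plan is to rewrite $h$ in terms of the conjugate partition, so that an elementary unit transfer changes exactly two column lengths. Then the sign of the change in $h$ can be read off directly. Write $T(x)=x(x+1)/2$. Counting the cells of the Ferrers diagram column by column, and using that row $i$ contributes weight $i$ for each of its cells, gives
\[
h(\lambda)=\sum_{i\ge1} i\lambda_i=\sum_{c\ge1}\ \sum_{i=1}^{\lambda'_c} i=\sum_{c\ge 1} T(\lambda'_c),
\]
where $\lambda'_c=\#\{i:\lambda_i\ge c\}$. The advantage is that the right-hand side depends only on the multiset of parts, so the reordering step in Definition~\ref{def:adjacency} is harmless.

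Next I describe the edge in these terms. Suppose $\mu$ arises from $\lambda$ by a transfer taking one unit from a part of size $a=\lambda_j\ge1$ and giving it to a part of size $b=\lambda_i$, with $i\ne j$. The multiset of parts changes by replacing $\{a,b\}$ with $\{a-1,b+1\}$, where a part $0$ is simply dropped. If $a=b+1$ the multiset is unchanged, so $\mu=\lambda$, which is excluded; hence $a\ne b+1$.

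Lowering $a$ to $a-1$ decreases only the count $\lambda'_a$, by one. Raising $b$ to $b+1$ increases only $\lambda'_{b+1}$, by one. Since $a\ne b+1$, these two column indices are distinct and the two effects do not interact. Using $T(x+1)-T(x)=x+1$ and $T(x)-T(x-1)=x$, this gives
\[
h(\mu)-h(\lambda)=\lambda'_{b+1}+1-\lambda'_a .
\]

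The remaining step, which is the only real content, is to show that this quantity is never zero. I split into two cases.
\begin{itemize}
\item If $b+1<a$, then $\lambda'_{b+1}\ge\lambda'_a$ because $\lambda'$ is nonincreasing. Hence $h(\mu)-h(\lambda)\ge1$.
\item If $b+1>a$, i.e.\ $b\ge a$, then the two distinct parts $\lambda_i=b$ and $\lambda_j=a$ are both at least $a$, so both are counted in $\lambda'_a$. Neither is counted in $\lambda'_{b+1}$, since both are at most $b$. Moreover, every part counted in $\lambda'_{b+1}$ is also counted in $\lambda'_a$. Therefore $\lambda'_a\ge\lambda'_{b+1}+2$, and $h(\mu)-h(\lambda)\le-1$.
\end{itemize}
In either case $h(\lambda)\ne h(\mu)$. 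Consequently each edge is oriented uniquely from its lower-height endpoint to its higher-height endpoint, which is the asserted unique height direction.

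The step I expect to need the most care is the bookkeeping in the conjugate description. One must check that the reordering, and the deletion of a zero part, leave the column counts exactly as claimed. One must also handle the degenerate case $a=b+1$, and verify that $i\ne j$ is what supplies the extra $+1$ in the second case.
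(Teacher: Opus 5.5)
Your argument is correct, but it follows a different route from the paper. The paper writes $h(\xi)=\sum_{r\ge 1}\sum_{t\ge r}\xi_t$ and works in dominance order. A transfer from position $j$ to an earlier position $i<j$ raises the partial sums $\sum_{t\le k}$ for $i\le k<j$, and reordering can only raise them further. So $\mu$ strictly dominates $\lambda$ and $h(\mu)<h(\lambda)$. The case $i>j$ is then dismissed as symmetric.

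You instead use the column formula $h(\lambda)=\sum_c T(\lambda'_c)$. It depends only on the multiset of parts, so reordering and deletion of zero parts never need to be tracked. You obtain the exact increment $h(\mu)-h(\lambda)=\lambda'_{b+1}+1-\lambda'_a$, whose sign is governed by the part sizes ($a\ge b+2$ versus $b\ge a$) rather than by positions.

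Your case split is the more robust one. The paper's case $i>j$ is not literally symmetric, because reordering can undo the drop in partial sums. When $\lambda_i=\lambda_j$, the sign asserted there is actually reversed: for $\lambda=(2,2)$ with $j=1$, $i=2$, one gets $\mu=(3,1)$ and $h(\mu)=5<6=h(\lambda)$. The conclusion $h(\lambda)\ne h(\mu)$ is unaffected.

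In exchange, the paper's approach yields a more conceptual fact: $h$ is strictly order-reversing for dominance, so it separates any two distinct dominance-comparable partitions. Yours yields an explicit formula for the height change along every edge.
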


\begin{proof}
By definition of the partition graph, adjacent vertices differ by a single elementary unit transfer between two parts, followed by reordering into nonincreasing form. Let $\mu$ be obtained from $\lambda$ by transferring one unit from the $j$-th part to the $i$-th part before reordering, where $i\ne j$.

For any partition $\xi=(\xi_1,\xi_2,\dots)$, write
\[
T_r(\xi):=\sum_{t\ge r}\xi_t.
\]
Then
\[
h(\xi)=\sum_{r\ge 1} T_r(\xi),
\]
since
\[
\sum_{r\ge 1} T_r(\xi)=\sum_{r\ge 1}\sum_{t\ge r}\xi_t=\sum_{t\ge 1} t\xi_t.
\]

Assume first that $i<j$. Before reordering, the modified sequence has the same total sum as $\lambda$, and its partial sums are larger than those of $\lambda$ for $k=i,\dots,j-1$ and unchanged otherwise. After reordering into nonincreasing form, the sums of the first $k$ terms can only increase. Hence
\[
\sum_{t=1}^k \mu_t\ge \sum_{t=1}^k \lambda_t
\qquad (k\ge 1),
\]
with strict inequality for some $k$. Since $|\lambda|=|\mu|$, this is equivalent to
\[
T_r(\mu)\le T_r(\lambda)
\qquad (r\ge 1),
\]
with strict inequality for some $r$. Therefore
\[
h(\mu)=\sum_{r\ge 1}T_r(\mu)<\sum_{r\ge 1}T_r(\lambda)=h(\lambda).
\]

The case $i>j$ is symmetric, and yields $h(\mu)>h(\lambda)$. In either case,
\[
h(\lambda)\ne h(\mu).
\]
Thus adjacent vertices always have distinct heights.
\end{proof}

\begin{proposition}\label{prop:height-acyclic}
Orient each edge of $G_n$ from the endpoint of smaller height to the endpoint of larger height. Then the resulting orientation is acyclic.
\end{proposition}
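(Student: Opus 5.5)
The plan is to use the height function $h$ as a strict potential along oriented edges. First, Lemma~\ref{lem:height-changes-on-edges} guarantees that the proposed orientation is well defined: every edge $\lambda\mu$ has $h(\lambda)\neq h(\mu)$, so exactly one endpoint has smaller height, and the edge is oriented from that endpoint to the other. In particular no edge is left unoriented and no edge receives both orientations.

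Next I would argue by contradiction. Suppose the oriented graph contains a directed cycle
\[
\lambda_0\to\lambda_1\to\cdots\to\lambda_{m-1}\to\lambda_m=\lambda_0,\qquad m\ge 1.
\]
By the definition of the orientation, each arc satisfies $h(\lambda_{i})<h(\lambda_{i+1})$ for $0\le i<m$. Chaining these strict inequalities gives $h(\lambda_0)<h(\lambda_m)=h(\lambda_0)$, which is impossible. Hence no directed cycle exists, and the orientation is acyclic.

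An equivalent way to phrase the same argument is that $h$ is a strictly increasing function along every directed path. Consequently, any ordering of $V(G_n)$ by nondecreasing height, with ties broken arbitrarily, is a topological order for this orientation. Ties cause no problem, because vertices of equal height are never adjacent, again by Lemma~\ref{lem:height-changes-on-edges}.

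There is essentially no obstacle here. All of the substantive work, namely that $h$ strictly changes across every edge, is already contained in Lemma~\ref{lem:height-changes-on-edges}. The only points to state explicitly are that the orientation is well defined and that the potential $h$ takes values in a totally ordered set (the integers), so that chaining strict inequalities around a cycle gives a contradiction.
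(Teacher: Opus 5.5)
Your proof is correct and matches the paper's argument: Lemma~\ref{lem:height-changes-on-edges} makes the orientation well defined, and a directed cycle would force a strictly increasing chain of heights that returns to its starting value. Your remark that ordering the vertices by height gives a topological order is just a restatement of the same fact.
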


\begin{proof}
By Lemma~\ref{lem:height-changes-on-edges}, this orientation is well defined on every edge. Along every oriented edge the height strictly increases. Therefore a directed cycle would force
\[
h(\lambda_0)<h(\lambda_1)<\cdots<h(\lambda_k)=h(\lambda_0),
\]
which is impossible.
\end{proof}

\begin{remark}
The height orientation is intrinsic: it does not depend on a chosen reference set. It should therefore be compared with the shell-based directional fields rather than identified with any one of them.
\end{remark}

\subsection{Directional equivalence and its failure}

\begin{definition}
Let $S,T \subseteq V(G_n)$ be nonempty. We say that $S$ and $T$ are \emph{directionally equivalent} if
\[
\sigma_S(\lambda,\mu)=\sigma_T(\lambda,\mu)
\]
for every oriented edge $(\lambda,\mu)$ of $G_n$.
\end{definition}

\begin{theorem}\label{thm:directional-equivalence}
Assume that $G_n$ is connected. Let $S,T \subseteq V(G_n)$ be nonempty. Then the following are equivalent:
\begin{enumerate}[label=(\roman*)]
\item $S$ and $T$ are directionally equivalent;
\item the function
\[
d_S-d_T
\]
is constant on $V(G_n)$.
\end{enumerate}
\end{theorem}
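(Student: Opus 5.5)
The argument is elementary: both directions reduce to comparing the increments of $d_S$ and $d_T$ along edges. Write $f=d_S-d_T$. For every oriented edge $(\lambda,\mu)$ we have
\[
f(\mu)-f(\lambda)=\sigma_S(\lambda,\mu)-\sigma_T(\lambda,\mu),
\]
directly from the definition $\sigma_S(\lambda,\mu)=d_S(\mu)-d_S(\lambda)$. By Proposition~\ref{prop:S-trichotomy}, both $\sigma_S$ and $\sigma_T$ take values in $\{-1,0,+1\}$, so they are well defined. Hence condition (i) says exactly that $f$ has the same value at the two endpoints of every edge.

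For (ii)$\Rightarrow$(i), assume $f\equiv c$. Then for any oriented edge the displayed identity gives $\sigma_S(\lambda,\mu)-\sigma_T(\lambda,\mu)=c-c=0$. This direction does not use connectivity.

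For (i)$\Rightarrow$(ii), assume $\sigma_S=\sigma_T$ on all oriented edges, so $f(\mu)=f(\lambda)$ whenever $\lambda\sim\mu$. Fix a base vertex, say $(n)$. For an arbitrary $\lambda$, Proposition~\ref{prop:Gn-connected} (or the standing connectivity hypothesis) gives a path $(n)=\lambda_0,\lambda_1,\dots,\lambda_m=\lambda$. Induction along this path shows $f(\lambda_i)=f(\lambda_0)$ for all $i$, so $f$ is constant on $V(G_n)$.

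There is no real obstacle here. The only point to state explicitly is where connectivity enters: without it, $f$ would merely be constant on each connected component. I would also record the consequence promised in the introduction, namely that distinct reference sets induce distinct directional fields. Suppose $S\neq T$, and by symmetry pick $\sigma\in S\setminus T$. Then $f(\sigma)=-d_T(\sigma)<0$. If $f$ were constant, then $f\equiv -d_T(\sigma)<0$, so at any $\tau\in T$ we would get $f(\tau)=d_S(\tau)<0$, which is impossible since distances are nonnegative.
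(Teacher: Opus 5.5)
Your proof is correct and follows essentially the same route as the paper. For (ii)$\Rightarrow$(i) you use equality of edge increments, and for (i)$\Rightarrow$(ii) you propagate along a path from a base vertex using connectivity; your step-by-step induction is the paper's telescoping sum, and your closing argument for distinct reference sets is a valid, slightly streamlined version of Corollary~\ref{cor:distinct-not-equivalent}.
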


\begin{proof}
If $d_S-d_T$ is constant, then for every oriented edge $(\lambda,\mu)$,
\[
d_S(\mu)-d_S(\lambda)=d_T(\mu)-d_T(\lambda),
\]
so $S$ and $T$ are directionally equivalent.

Conversely, assume that $\sigma_S(\lambda,\mu)=\sigma_T(\lambda,\mu)$ for every oriented edge. Fix a base vertex $\lambda_0 \in V(G_n)$ and let $\lambda \in V(G_n)$ be arbitrary. Since $G_n$ is connected by Proposition~\ref{prop:Gn-connected}, there exists a path
\[
\lambda_0,\lambda_1,\dots,\lambda_m=\lambda.
\]
Summing the edge increments along this path yields
\[
d_S(\lambda)-d_S(\lambda_0)
=\sum_{i=0}^{m-1}\sigma_S(\lambda_i,\lambda_{i+1})
=\sum_{i=0}^{m-1}\sigma_T(\lambda_i,\lambda_{i+1})
=d_T(\lambda)-d_T(\lambda_0).
\]
Hence
\[
d_S(\lambda)-d_T(\lambda)=d_S(\lambda_0)-d_T(\lambda_0),
\]
which is independent of $\lambda$.
\end{proof}

\begin{corollary}\label{cor:distinct-not-equivalent}
Let $S,T \subseteq V(G_n)$ be nonempty and distinct. Then $S$ and $T$ are not directionally equivalent. Equivalently, there exists an oriented edge $(\lambda,\mu)$ such that
\[
\sigma_S(\lambda,\mu)\ne \sigma_T(\lambda,\mu).
\]
\end{corollary}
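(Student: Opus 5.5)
By Theorem~\ref{thm:directional-equivalence} together with Proposition~\ref{prop:Gn-connected}, it suffices to show that $d_S-d_T$ is not constant on $V(G_n)$ whenever $S\ne T$. The plan is to locate two vertices at which this difference takes different values, using only the zero sets of the two distance functions.

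Since $S\ne T$, after possibly swapping the roles of $S$ and $T$ we may fix a vertex $\alpha\in S\setminus T$. Then $d_S(\alpha)=0$. Because $\alpha\notin T$, we also have $d_T(\alpha)\ge 1$, so
\[
(d_S-d_T)(\alpha)=-d_T(\alpha)\le -1.
\]

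Next, choose any $\beta\in T$, which exists because $T$ is nonempty. Then $d_T(\beta)=0$, and therefore
\[
(d_S-d_T)(\beta)=d_S(\beta)\ge 0.
\]
Thus the function takes a value at most $-1$ at $\alpha$ and a value at least $0$ at $\beta$. In particular these values differ, so $d_S-d_T$ is not constant.

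Theorem~\ref{thm:directional-equivalence} then shows that $S$ and $T$ are not directionally equivalent. By the definition of directional equivalence, this means exactly that some oriented edge $(\lambda,\mu)$ has $\sigma_S(\lambda,\mu)\ne\sigma_T(\lambda,\mu)$. If an explicit edge is wanted, the summation argument in the proof of the theorem supplies one: along any path from $\beta$ to $\alpha$ the increments of $d_S$ and $d_T$ cannot all agree, since their totals differ.

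There is no real obstacle. The only care needed is in the case where one set contains the other, say $S\subsetneq T$. There the witness $\alpha$ must be taken from $T\setminus S$ with the roles of $S$ and $T$ swapped, which is the "possibly swapping" allowed at the start. The sign argument is symmetric, so the conclusion is unaffected.
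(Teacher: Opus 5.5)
Your proposal is correct and is essentially the paper's argument: both use Theorem~\ref{thm:directional-equivalence} to reduce to the non-constancy of $d_S-d_T$, and both compare its sign at a point of $S$ and at a point of $T$. The only difference is presentation. You argue directly, picking a point of $S\setminus T$ to get a strictly negative value. The paper argues by contradiction: it shows the constant must be $0$, so $d_S=d_T$, and then recovers $S=T$ from the zero sets.
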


\begin{proof}
Suppose that $S$ and $T$ are directionally equivalent. By Theorem~\ref{thm:directional-equivalence}, the function
\[
d_S-d_T
\]
is constant on $V(G_n)$; write this constant as $c$.

Choose $s\in S$ and $t\in T$. Since $d_S(s)=0$ and $d_T(t)=0$, we have
\[
c=d_S(s)-d_T(s)=-d_T(s)\le 0
\]
and
\[
c=d_S(t)-d_T(t)=d_S(t)\ge 0.
\]
Hence $c=0$, so
\[
d_S=d_T
\]
on all of $V(G_n)$.

Now let $s\in S$. Then $d_S(s)=0$, hence $d_T(s)=0$, which means that $s\in T$. Thus $S\subseteq T$. By symmetry, $T\subseteq S$. Therefore $S=T$, contradicting the assumption that the two sets are distinct.
\end{proof}

\begin{remark}
Applying Corollary~\ref{cor:distinct-not-equivalent} to
\[
M_n,\qquad \Ax_n,\qquad \Sp_n,\qquad \Fr_n,
\]
we conclude that the four canonical directional fields are pairwise distinct. This is the basic strict form of anisotropy in $G_n$.
\end{remark}

\subsection{Access to bounded neighborhoods}

The corridor formalism extends from reference sets to their bounded neighborhoods.

\begin{proposition}\label{prop:reach-neighborhood}
Let $S \subseteq V(G_n)$ be nonempty, and let $r \ge 0$. For every vertex $\lambda \in V(G_n)$, there exists a path from $\lambda$ to the closed neighborhood
\[
N_S^{(\le r)}=\{\mu \in V(G_n): d_S(\mu)\le r\}
\]
which is $S$-monotone inward and has length
\[
\max\{d_S(\lambda)-r,0\}.
\]
In particular, if $d_S(\lambda)\ge r$, then there exists an $S$-monotone inward geodesic from $\lambda$ to $N_S^{(\le r)}$ of length $d_S(\lambda)-r$.
\end{proposition}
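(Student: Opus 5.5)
The plan is to truncate a directional corridor from Proposition~\ref{prop:monotone-geodesic}. Write $k=d_S(\lambda)$ and split into two cases according to whether $k\le r$ or $k>r$. If $k\le r$, then $\lambda$ already lies in $N_S^{(\le r)}$. The trivial path $(\lambda)$ has length $0=\max\{k-r,0\}$, and it is vacuously $S$-monotone inward, so this case is immediate.

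If $k>r$, invoke Proposition~\ref{prop:monotone-geodesic} to obtain an $S$-monotone inward geodesic $\lambda=\lambda_0,\lambda_1,\dots,\lambda_k$ with $\lambda_k\in S$. By Lemma~\ref{lem:monotone-length}, $d_S(\lambda_i)=k-i$ for every $i$. Truncate at index $m=k-r$. The initial segment $(\lambda_0,\dots,\lambda_m)$ is still $S$-monotone inward, since the monotonicity condition is edgewise. It ends at $\lambda_m$ with $d_S(\lambda_m)=r$, so $\lambda_m\in N_S^{(\le r)}$, and its length is $k-r=\max\{k-r,0\}$. The boundary case $k=r$ is covered by either case and gives length $0$.

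For the ``in particular'' clause, I would show that this path is a geodesic to $N_S^{(\le r)}$, meaning that no path from $\lambda$ to $N_S^{(\le r)}$ is shorter than $k-r$. Suppose a path of length $\ell$ runs from $\lambda$ to some $\nu$ with $d_S(\nu)\le r$. Applying Proposition~\ref{prop:S-trichotomy} along each edge gives $k=d_S(\lambda)\le \ell+d_S(\nu)\le \ell+r$, hence $\ell\ge k-r$. Alternatively, the triangle inequality $d_S(\lambda)\le d(\lambda,\nu)+d_S(\nu)$ gives the same bound. So the truncated corridor realizes the distance from $\lambda$ to $N_S^{(\le r)}$.

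There is no real obstacle here. The only step needing care is that the geodesic claim refers to the distance to the neighborhood $N_S^{(\le r)}$ rather than to $S$, and the lower bound above handles exactly that.
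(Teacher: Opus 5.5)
Your proposal is correct and matches the paper's proof: use the trivial path when $d_S(\lambda)\le r$; otherwise truncate a corridor from Proposition~\ref{prop:monotone-geodesic} at index $d_S(\lambda)-r$ via Lemma~\ref{lem:monotone-length}, with optimality because $d_S$ changes by at most $1$ per edge. Your explicit bound $d_S(\lambda)\le \ell+d_S(\nu)\le \ell+r$ spells out the paper's one-line optimality remark.
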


\begin{proof}
If $d_S(\lambda)\le r$, the trivial path suffices. Otherwise choose an $S$-directional corridor
\[
\lambda=\lambda_0,\lambda_1,\dots,\lambda_m
\]
from $\lambda$ to $S$, where $m=d_S(\lambda)$ by Proposition~\ref{prop:monotone-geodesic}. By Lemma~\ref{lem:monotone-length},
\[
d_S(\lambda_i)=d_S(\lambda)-i.
\]
Setting $i_*=d_S(\lambda)-r$, we obtain $d_S(\lambda_{i_*})=r$, so $\lambda_{i_*} \in N_S^{(\le r)}$. The initial segment of the corridor has length $d_S(\lambda)-r$ and is optimal, since each edge changes $d_S$ by at most $1$.
\end{proof}

\begin{definition}
A set $X \subseteq V(G_n)$ is \emph{$S$-controlled with radius $r$} if
\[
X \subseteq N_S^{(\le r)}.
\]
\end{definition}

\begin{proposition}\label{prop:controlled-access}
Let $X \subseteq V(G_n)$ be $S$-controlled with radius $r$. Then every vertex $\lambda \in V(G_n)$ admits an $S$-monotone inward geodesic to the controlled region $N_S^{(\le r)}$. If $d_S(\lambda)\ge r$, the length of such a geodesic is $d_S(\lambda)-r$.
\end{proposition}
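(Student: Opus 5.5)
This is essentially a restatement of Proposition~\ref{prop:reach-neighborhood}, so the plan is to reduce to it directly. The hypothesis that $X$ is $S$-controlled with radius $r$, namely $X\subseteq N_S^{(\le r)}$, plays no role in producing the path. It only explains why reaching $N_S^{(\le r)}$ is meaningful: it is a region containing $X$. So I will fix an arbitrary vertex $\lambda$ and argue entirely in terms of $d_S$ and $N_S^{(\le r)}$.

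\emph{Existence of the path.} First, if $d_S(\lambda)\le r$, then $\lambda\in N_S^{(\le r)}$. The trivial path of length $0$ is vacuously $S$-monotone inward and is clearly geodesic. When $d_S(\lambda)=r$, its length is $0=d_S(\lambda)-r$, as the statement requires. Otherwise $d_S(\lambda)>r$. Proposition~\ref{prop:monotone-geodesic} gives an $S$-directional corridor $\lambda=\lambda_0,\dots,\lambda_m$ with $m=d_S(\lambda)$. By Lemma~\ref{lem:monotone-length}, $d_S(\lambda_i)=d_S(\lambda)-i$. Truncating at $i_*=d_S(\lambda)-r$ yields an $S$-monotone inward path that ends in $N_S^{(\le r)}$ and has length $d_S(\lambda)-r$. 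This is exactly the path already constructed in Proposition~\ref{prop:reach-neighborhood}, which I will simply cite.

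\emph{Geodesicity.} I will also make explicit that this path is a shortest path to the set $N_S^{(\le r)}$. Take any path from $\lambda$ that ends at some $\nu$ with $d_S(\nu)\le r$. By Proposition~\ref{prop:S-trichotomy}, each edge lowers $d_S$ by at most $1$. Hence the path has length at least $d_S(\lambda)-d_S(\nu)\ge d_S(\lambda)-r$. The truncated corridor therefore attains the distance from $\lambda$ to $N_S^{(\le r)}$.

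There is no real obstacle here. The only point needing care is the interpretation of ``geodesic to the controlled region'': it means geodesic to the set $N_S^{(\le r)}$, not to $X$ itself. The argument does not give a path into $X$, and $X$ may be much smaller than the neighborhood.
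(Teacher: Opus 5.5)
Your proposal is correct and follows the paper's route: the paper's proof is just a citation of Proposition~\ref{prop:reach-neighborhood}, and your truncated-corridor construction plus the lower bound from Proposition~\ref{prop:S-trichotomy} is exactly the argument of that proposition. Your observation that the hypothesis $X\subseteq N_S^{(\le r)}$ plays no role in producing the path is also accurate.
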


\begin{proof}
Immediate from Proposition~\ref{prop:reach-neighborhood}.
\end{proof}

\begin{remark}
Proposition~\ref{prop:controlled-access} is intentionally formulated in abstract form. Its role is to convert previously established containment statements into directional ones. Whenever a distinguished family of vertices is already known to lie in a bounded neighborhood of $\Ax_n$, $\Sp_n$, or $\Fr_n$, the proposition immediately yields corresponding axial, spinal, or framework corridors to that region.
\end{remark}

\section{Computational directional atlas}

We now complement the strict theory with a small computational atlas. All computations in this section were carried out for the tested range $8 \le n \le 12$. The figures use fixed layouts for $G_{10}$ and $G_{12}$, while the tables summarize either selected values of $n$ or the whole tested range.

For each $n$, we enumerated all partitions of $n$, constructed the graph $G_n$ using Definition~\ref{def:adjacency}, and computed the reference sets $M_n$, $\Ax_n$, $\Sp_n$, and $\Fr_n$. We then computed the distance fields $d_M$, $d_{\Ax}$, $d_{\Sp}$, and $d_{\Fr}$. Local degrees are graph degrees. The local simplex dimension of a vertex was computed as the maximal clique size containing that vertex minus one. Canonical directional corridors were obtained by choosing, at each step, the lexicographically smallest neighbor that decreases the relevant distance by one.

\subsection{Edgewise directional statistics}

Figures~\ref{fig:axial-shells} and~\ref{fig:spinal-shells} show the vertex shell geometry of $G_{10}$ relative to the axis and the spine, using the same embedding in both panels. The two shell structures are very close for $G_{10}$, so the vertices at which $d_{\Ax}$ and $d_{\Sp}$ differ are marked by black rings. Figures~\ref{fig:axial-edges} and~\ref{fig:framework-edges} display the corresponding same-shell versus cross-shell edge behavior.

\begin{figure}[t]
\centering
\begin{subfigure}[t]{0.48\textwidth}
\centering
\includegraphics[width=\textwidth]{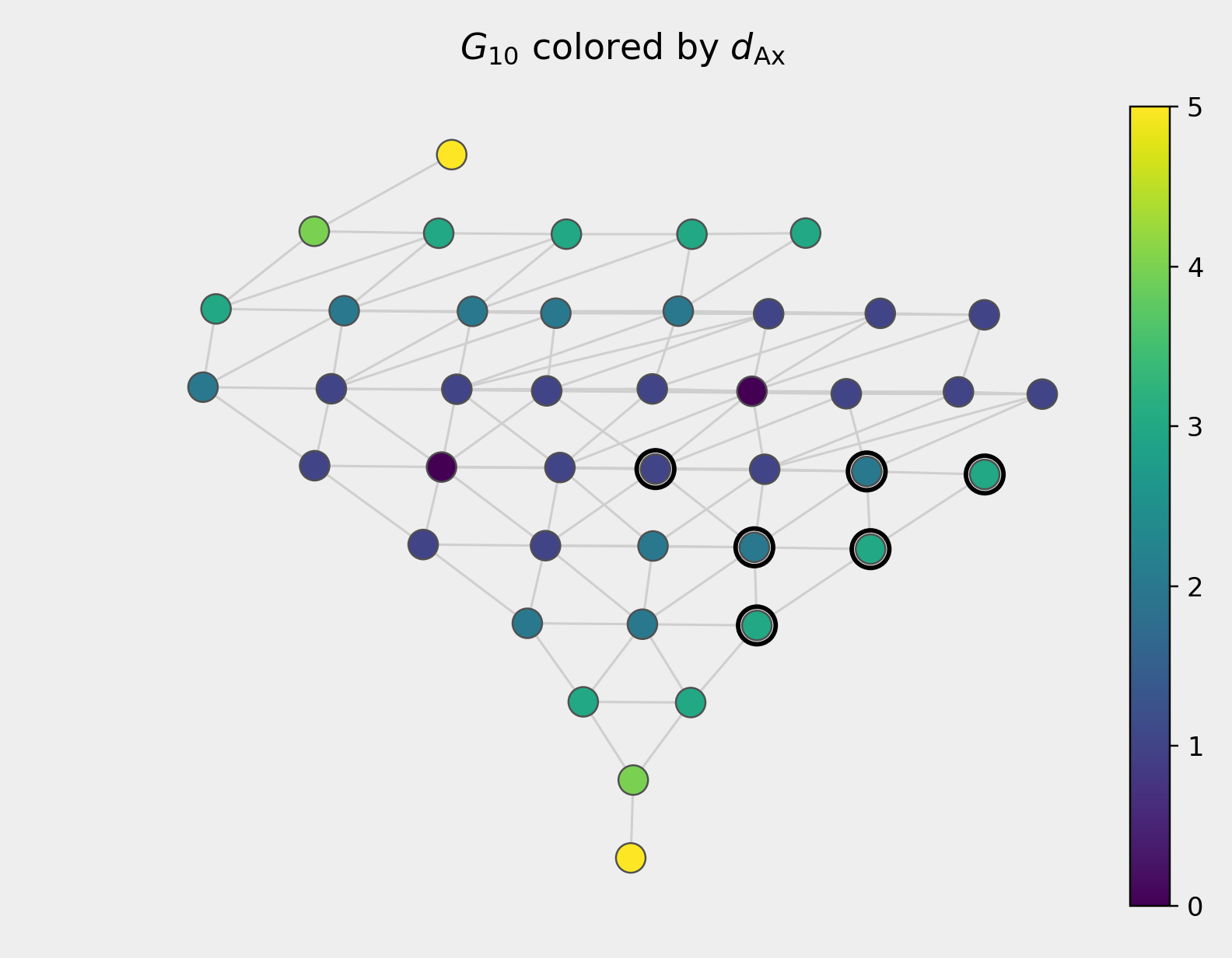}
\caption{Coloring by $d_{\Ax}$; black rings mark vertices where $d_{\Ax}\neq d_{\Sp}$.}
\label{fig:axial-shells}
\end{subfigure}
\hfill
\begin{subfigure}[t]{0.48\textwidth}
\centering
\includegraphics[width=\textwidth]{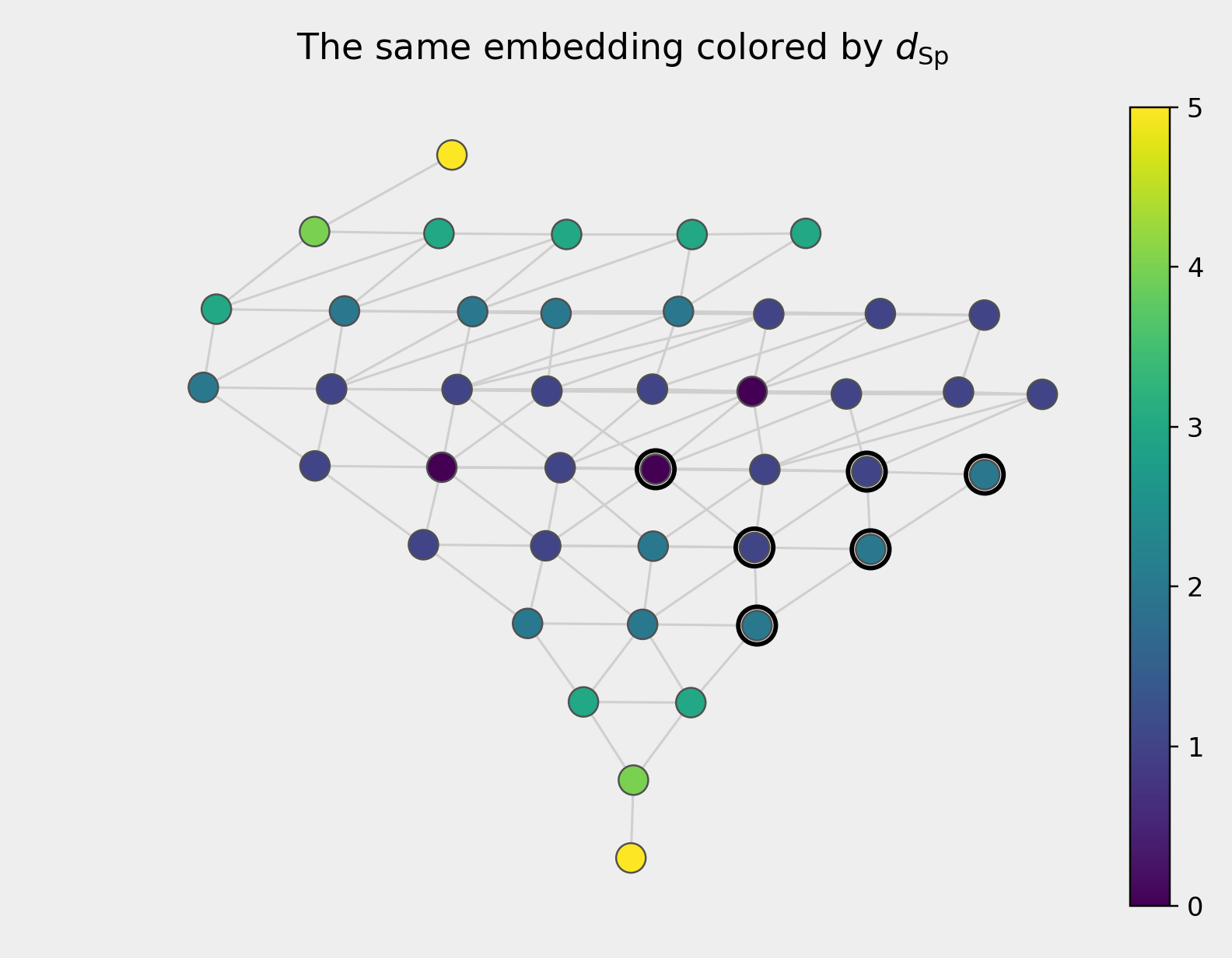}
\caption{Coloring by $d_{\Sp}$; black rings mark vertices where $d_{\Ax}\neq d_{\Sp}$.}
\label{fig:spinal-shells}
\end{subfigure}
\caption{Two shell structures on the same embedding of $G_{10}$, shown with identical vertex positions to isolate the difference between axial and spinal shells. The two shell structures are close but not identical; vertices where the two shell distances differ are marked by black rings.}
\end{figure}

\begin{figure}[t]
\centering
\begin{subfigure}[t]{0.48\textwidth}
\centering
\includegraphics[width=\textwidth]{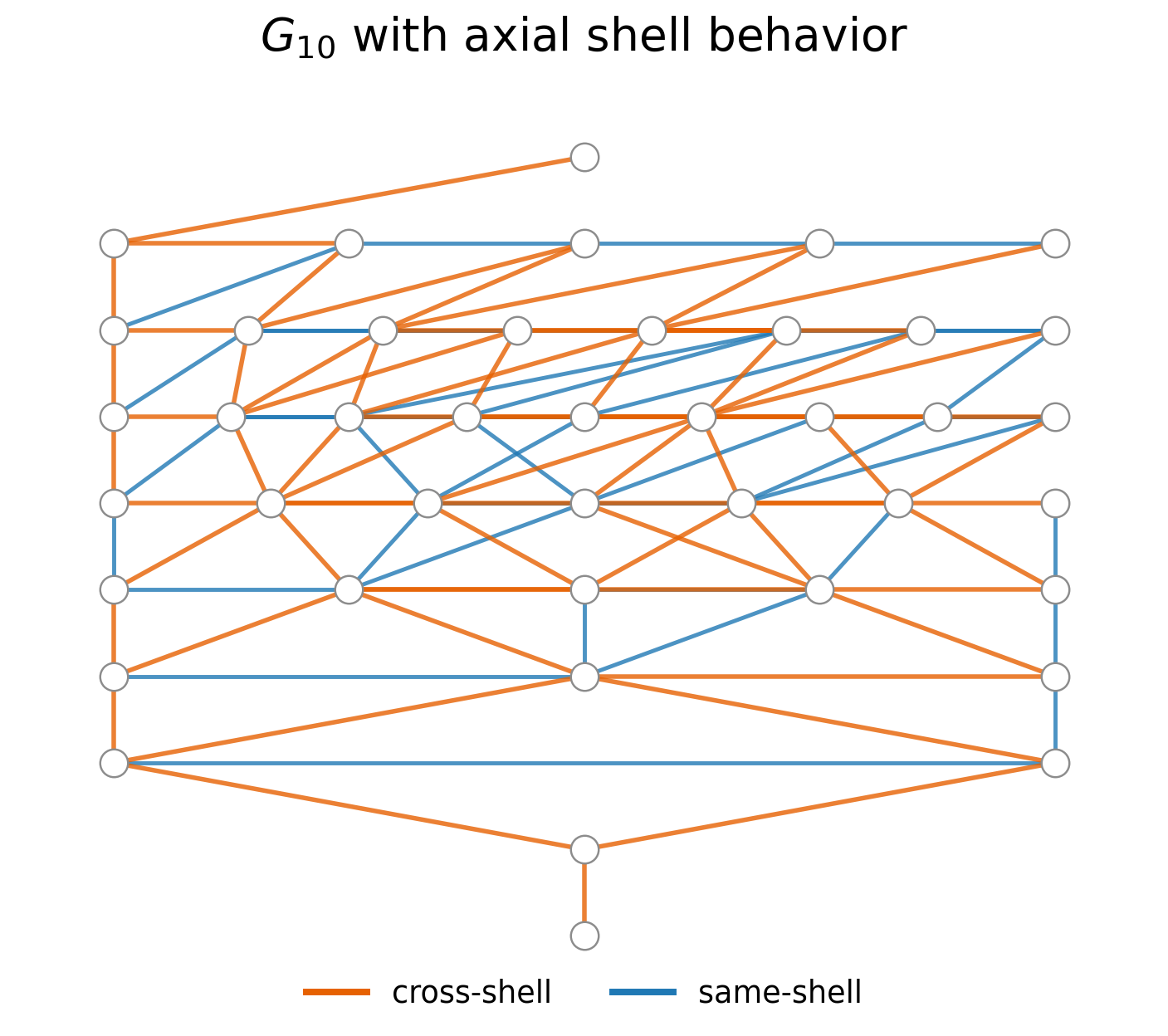}
\caption{Relative to $\Ax_{10}$.}
\label{fig:axial-edges}
\end{subfigure}
\hfill
\begin{subfigure}[t]{0.48\textwidth}
\centering
\includegraphics[width=\textwidth]{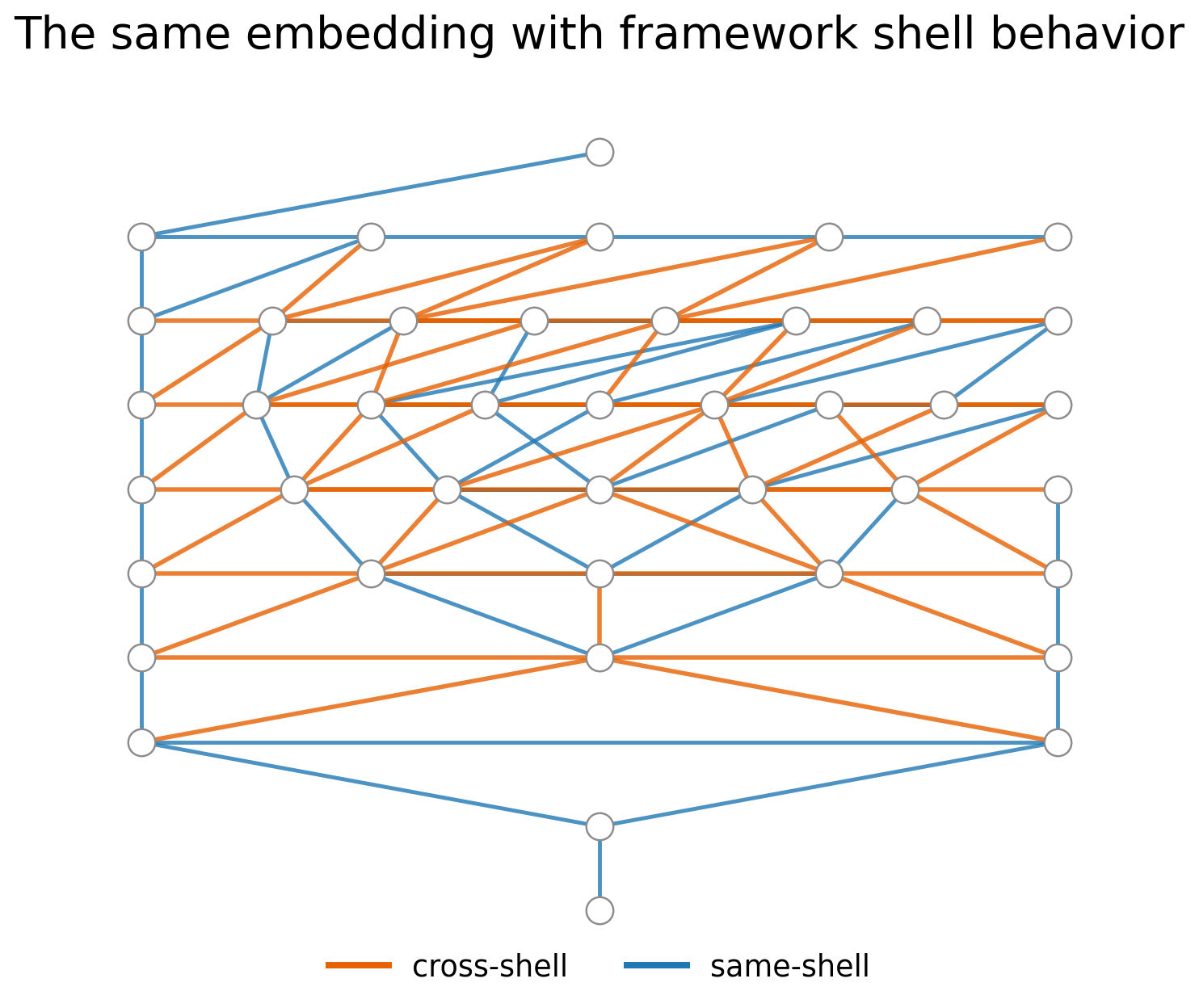}
\caption{Relative to $\Fr_{10}$.}
\label{fig:framework-edges}
\end{subfigure}
\caption{Same-shell and cross-shell edge behavior for two different reference sets. In each panel, blue edges remain inside a shell and orange edges cross between adjacent shells.}
\end{figure}

\begin{table}[t]
\centering
\small
\begin{tabular}{ccccc}
\toprule
$n$ & reference set & level share & transverse share & max shell radius \\
\midrule
8 & $M_n$ & 36.2\% & 63.8\% & 3 \\
8 & $\Ax_n$ & 36.2\% & 63.8\% & 4 \\
8 & $\Sp_n$ & 34.0\% & 66.0\% & 4 \\
8 & $\Fr_n$ & 48.9\% & 51.1\% & 2 \\
10 & $M_n$ & 35.1\% & 64.9\% & 4 \\
10 & $\Ax_n$ & 40.4\% & 59.6\% & 5 \\
10 & $\Sp_n$ & 40.4\% & 59.6\% & 5 \\
10 & $\Fr_n$ & 45.6\% & 54.4\% & 3 \\
12 & $M_n$ & 34.4\% & 65.6\% & 6 \\
12 & $\Ax_n$ & 42.3\% & 57.7\% & 6 \\
12 & $\Sp_n$ & 40.3\% & 59.7\% & 6 \\
12 & $\Fr_n$ & 41.5\% & 58.5\% & 4 \\
\bottomrule
\end{tabular}
\caption{Edgewise directional statistics for the canonical reference sets.}
\label{tab:edgewise-directional-stats}
\end{table}

\begin{observation}\label{obs:edgewise-nonuniformity}
In the tested range, the shell geometry around the canonical reference sets is visibly non-uniform. In particular, the level/transverse split depends substantially on the reference set. For example, the framework exhibits a larger level share than the main chain, the axis, or the spine at $n=8$ and $n=10$, while the axial and spinal shell geometries are visually distinguishable in the figures, with the local discrepancies marked explicitly in Figure~\ref{fig:axial-shells}--\ref{fig:spinal-shells}.
\end{observation}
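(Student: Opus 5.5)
The observation is a finite, computational statement about $G_8$, $G_{10}$ and $G_{12}$, so I would establish it by direct verification, using the strict theory only to explain why some of the discrepancies must exist. The first step is to construct $G_n$ for $n=8,10,12$ from Definition~\ref{def:adjacency}. I would enumerate all partitions of $n$; for each $\lambda$ and each ordered pair $i\ne j$, I would apply the unit transfer, reorder, and record an edge whenever the result $\mu$ differs from $\lambda$. By Proposition~\ref{prop:Gn-connected} the resulting graph is connected, so every distance below is finite. I would then form $M_n$, $\Ax_n$, $L_n$, $R_n=L_n'$ and $\Fr_n$ directly from their definitions. For $\Sp_n$, I would sort the self-conjugate partitions in decreasing lexicographic order and join each consecutive pair by the lexicographically minimal shortest path. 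This path can be found by BFS from the target followed by a greedy choice of the lexicographically smallest neighbor at each step.

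Next, for each $S\in\{M_n,\Ax_n,\Sp_n,\Fr_n\}$, I would run a multi-source BFS to obtain $d_S$. By Proposition~\ref{prop:S-trichotomy} every edge then falls into exactly one class: level ($\sigma_S=0$) or transverse ($\sigma_S=\pm1$). Counting the level edges and dividing by $|E(G_n)|$ gives the level share, and $\max d_S$ gives the shell radius. These numbers reproduce Table~\ref{tab:edgewise-directional-stats}. The quantitative claims in the observation are then read off from the table:
\begin{itemize}
\item at $n=8$, the framework level share is $48.9\%$, against $36.2\%$, $36.2\%$ and $34.0\%$;
\item at $n=10$, it is $45.6\%$, against $35.1\%$, $40.4\%$ and $40.4\%$.
\end{itemize}
So the framework exceeds the other three at both values. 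The spread of level shares across reference sets (up to about $15$ percentage points) is the sense in which the split ``depends substantially'' on $S$.

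For the claim that the axial and spinal shells differ, I would not rely on pictures alone. Whenever $\Sp_n\ne\Ax_n$, which holds as soon as some chosen connecting path has an interior vertex that is not self-conjugate, the proof of Corollary~\ref{cor:distinct-not-equivalent} shows that $d_{\Ax}$ and $d_{\Sp}$ cannot coincide on all of $V(G_n)$. Hence a vertex with $d_{\Ax}\ne d_{\Sp}$ must exist. Computationally, I would list these vertices for $n=10$ explicitly; they are exactly the ringed vertices in Figures~\ref{fig:axial-shells}--\ref{fig:spinal-shells}.

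I expect the main obstacle to be the canonical construction of $\Sp_n$. Its lexicographic tie-breaking must be implemented exactly as specified, because a different choice of shortest path changes $\Sp_n$, and with it the $\Sp_n$ rows of the table and the set of ringed vertices. I would cross-check this step by confirming that every chosen path has length equal to the BFS distance between its endpoints, and by checking that the recomputed percentages match the tabulated values.
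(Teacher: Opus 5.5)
Your proposal is correct and follows the paper's approach. The paper supports this observation only by the computation summarized in Table~\ref{tab:edgewise-directional-stats} and the ringed vertices of Figures~\ref{fig:axial-shells}--\ref{fig:spinal-shells}, and your comparisons at $n=8$ and $n=10$ read those values off correctly; your added remark that $\Sp_n\neq\Ax_n$ forces $d_{\Ax}\neq d_{\Sp}$ somewhere is a valid rigorous supplement to what the paper leaves to the figures (it is immediate, since any vertex of $\Sp_n\setminus\Ax_n$ has $d_{\Sp}=0<d_{\Ax}$).
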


\subsection{Combined signatures and directional mixing}

\begin{table}[t]
\centering
\small
\begin{tabular}{cccc}
\toprule
$n$ & distinct combined signatures & mixed-edge share & coherent-edge share \\
\midrule
8 & 24 & 42.6\% & 57.4\% \\
9 & 32 & 41.1\% & 58.9\% \\
10 & 32 & 41.2\% & 58.8\% \\
11 & 33 & 43.5\% & 56.5\% \\
12 & 34 & 37.2\% & 62.8\% \\
\bottomrule
\end{tabular}
\caption{Combined directional signatures in the tested range.}
\label{tab:combined-signatures}
\end{table}

\begin{observation}\label{obs:directional-mixing}
Directionally mixed edges form a substantial part of $G_n$ throughout the tested range. The mixed-edge share remains between $37.2\%$ and $43.5\%$ for $8 \le n \le 12$, and the number of distinct combined signatures increases from $24$ at $n=8$ to $34$ at $n=12$.
\end{observation}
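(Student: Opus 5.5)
The statement of Observation~\ref{obs:directional-mixing} is a finite computational claim for $8\le n\le 12$, so I will establish it by exhaustive computation, structured so that each reported number is an exact count over a fully specified finite object. For each $n$ I will enumerate all partitions of $n$ in decreasing lexicographic order and build $G_n$ directly from Definition~\ref{def:adjacency}. Concretely, for every ordered pair of distinct indices $(i,j)$ I apply the unit transfer, reorder, and record the undirected edge whenever the result differs from $\lambda$. Proposition~\ref{prop:Gn-connected} guarantees that $G_n$ is connected, so all the breadth-first searches below are well defined and return finite values.

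Next I will compute the four reference sets. $M_n$, $L_n$, $R_n=L_n'$ and $\Fr_n$ are explicit. $\Ax_n$ is obtained by testing $\lambda=\lambda'$. For $\Sp_n$ I list the self-conjugate partitions $\sigma_1,\dots,\sigma_m$ in decreasing lexicographic order. For each consecutive pair I run a BFS from $\sigma_{i+1}$ to obtain distances to it, then build the shortest path from $\sigma_i$ greedily: at each step I choose the lexicographically smallest neighbour whose distance drops by one. This yields the lexicographically minimal shortest path when paths are compared as sequences of vertices under the same partition order. A multi-source BFS from each set then gives $d_M,d_{\Ax},d_{\Sp},d_{\Fr}$. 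For every oriented edge $(\lambda,\mu)$ I form $\Sigma(\lambda,\mu)$; by Proposition~\ref{prop:S-trichotomy} this lies in $\{-1,0,1\}^4$. I then count the distinct signatures and the edges containing both a $+1$ and a $-1$ component.

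Two bookkeeping points will be fixed explicitly. First, reversing an edge negates its signature, so being mixed is independent of orientation. The mixed share is therefore the same whether it is computed over oriented or unoriented edges, and I will report it over unoriented edges. Second, the set of signatures arising from oriented edges is closed under negation. The count of distinct combined signatures must therefore be stated with a fixed convention, and I will use all oriented edges. As a sanity check, the level and transverse shares of Table~\ref{tab:edgewise-directional-stats} should be recovered from the individual components $\sigma_S$. Once these counts are computed, the claimed range $37.2\%$--$43.5\%$ and the growth of the signature count from $24$ to $34$ can be read off Table~\ref{tab:combined-signatures}.

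I expect the main obstacle to be reproducing $\Sp_n$ exactly. The lexicographic tie-breaking in the definition of the spine must match the intended convention, and a different tie-break can change $d_{\Sp}$ and hence the signature counts. I will therefore first check the sensitivity to this choice. Concretely, I will compute the statistics for every admissible choice of shortest paths between consecutive self-conjugate partitions when $n\le 10$. If the counts are invariant under that choice, this obstacle disappears; if they are not, I will report the counts for the specified lexicographic convention only.
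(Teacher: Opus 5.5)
Your route is the same as the paper's. The observation rests only on the exhaustive computation described at the start of Section~6 and tabulated in Table~\ref{tab:combined-signatures}. Your treatment of the mixed-edge share is sound: an edge is mixed exactly when it has both a $+1$ and a $-1$ component, so mixedness does not depend on orientation. Your greedy construction of the lexicographically minimal geodesic is also correct.

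\textbf{The step that fails is your convention for counting distinct signatures.} You correctly note that over all oriented edges the signature set is closed under $\Sigma\mapsto-\Sigma$. Its size is therefore odd exactly when $(0,0,0,0)$ occurs. At $n=8$ it does occur, for any spine tie-breaking. Take the hook edge $(5,1^3)$--$(4,1^4)$:
\begin{enumerate}[label=(\roman*)]
\item It lies in $M_8\subseteq\Fr_8$, so $d_M=d_{\Fr}=0$ at both ends.
\item The two hooks are conjugate to each other, neither is self-conjugate, and both are adjacent to $(4,2,1,1)\in\Ax_8$. Hence $d_{\Ax}=1$ at both ends.
\item We have $\Ax_8=\{(4,2,1,1),(3,3,2)\}$ with $d\bigl((4,2,1,1),(3,3,2)\bigr)=2$. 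So the only spine vertex outside $\Ax_8$ is a neighbour of $(3,3,2)$. Every such neighbour still has two parts $\ge 2$, so it is not a hook. Thus $d_{\Sp}=1$ at both ends.
\end{enumerate}
Consequently your count at $n=8$ is odd and cannot equal $24$. Executed as written, your computation would refute rather than confirm the second half of the observation.

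The reported figures are compatible only with counting one orientation per undirected edge; otherwise the table is in error. Under a one-orientation convention the count is not intrinsic. If $N$ is your all-orientations count, a single orientation per edge yields a value between $(N+1)/2$ (one representative per $\pm$-class) and $N$. Which value you get depends on the orientation rule, and the paper never specifies it.

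So before you can verify ``$24$ at $n=8$ to $34$ at $n=12$'' you must pin down that rule. For example, test the edge-generation order of your enumeration and the height orientation of Proposition~\ref{prop:height-acyclic} against Table~\ref{tab:combined-signatures}. Without this, only the mixed-edge range $37.2\%$--$43.5\%$ is a convention-free, checkable claim.

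Two further points. The spine tie-breaking you identify as the main obstacle does not decide this discrepancy. And your final step must compare your own computed values with the claimed ones, rather than read them off the paper's table.
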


\begin{remark}
Observation~\ref{obs:directional-mixing} is the most direct empirical reflection of Corollary~\ref{cor:distinct-not-equivalent}. The theorem says that different reference sets cannot induce the same field globally; the computations show that their disagreement is not confined to a small exceptional subset of edges.
\end{remark}

\subsection{Directional drift of local invariants}

For every oriented edge $(\lambda,\mu)$ we computed
\[
\Delta \deg = \deg(\mu)-\deg(\lambda),\qquad
\Delta \dim_{\mathrm{loc}} = \dim_{\mathrm{loc}}(\mu)-\dim_{\mathrm{loc}}(\lambda),
\]
together with $\Delta h$. Table~\ref{tab:directional-drift} aggregates these quantities over all oriented edges of the tested range $8 \le n \le 12$.

\begin{table}[t]
\centering
\scriptsize
\setlength{\tabcolsep}{4pt}
\begin{tabular}{lrrrrrr}
\toprule
directional class & mean $\Delta \deg$ & median $\Delta \deg$ & share $\Delta \deg>0$ & mean $\Delta \dim_{\mathrm{loc}}$ & share $\Delta \dim_{\mathrm{loc}}>0$ & mean $\Delta h$ \\
\midrule
axial inward & 1.81 & 2 & 65.5\% & 0.34 & 38.6\% & -0.98 \\
spinal inward & 2.10 & 2 & 71.6\% & 0.33 & 35.7\% & -0.60 \\
framework inward & -1.23 & -1 & 27.0\% & -0.42 & 3.2\% & 0.93 \\
axial level & 0.00 & 0 & 35.0\% & 0.00 & 8.7\% & 0.00 \\
spinal level & 0.00 & 0 & 34.6\% & 0.00 & 12.3\% & 0.00 \\
framework level & 0.00 & 0 & 29.6\% & 0.00 & 7.3\% & 0.00 \\
\bottomrule
\end{tabular}
\caption{Empirical drift of local invariants along selected directional classes, aggregated over $8 \le n \le 12$.}
\label{tab:directional-drift}
\end{table}

\begin{observation}\label{obs:drift-contrast}
The empirical drift of local invariants depends strongly on the directional regime. In the tested range, axial inward and spinal inward motion have positive average drift in both degree and local simplex dimension, while framework inward motion has negative average drift in both quantities. Numerically, the mean degree drift is $1.81$ for axial inward edges, $2.10$ for spinal inward edges, and $-1.23$ for framework inward edges.
\end{observation}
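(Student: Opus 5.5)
Since this is an empirical observation rather than a theorem, I would not try to derive it from the strict theory of Sections~2--5. I would instead certify it by an exact, reproducible computation over the finite range $8\le n\le 12$, and then read the claimed signs and values from Table~\ref{tab:directional-drift}.

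The computation would proceed as follows.
\begin{enumerate}[label=(\arabic*)]
\item For each $n$, enumerate the partitions of $n$. Build $G_n$ by applying every elementary unit transfer of Definition~\ref{def:adjacency} to every vertex. Record each edge in both orientations, since the table aggregates over oriented edges.
\item Construct $\Ax_n$, $\Fr_n=M_n\cup L_n\cup R_n$ and $\Sp_n$. For the spine, recompute lexicographically minimal shortest paths between consecutive self-conjugate partitions, using the same total order on partitions as in Section~6.
\item Compute $d_{\Ax}$, $d_{\Sp}$ and $d_{\Fr}$ by multi-source breadth-first search. Proposition~\ref{prop:S-trichotomy} guarantees that every oriented edge falls into exactly one of the classes inward, level or outward.
\item Compute $\deg$ directly and $h$ from its formula. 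Compute $\dim_{\mathrm{loc}}(\lambda)$ as the maximum clique size through $\lambda$ minus one, by exhaustive clique search in the neighbourhood of $\lambda$; this is cheap here, since $p(12)=77$.
\end{enumerate}

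I would then aggregate $\Delta\deg$, $\Delta\dim_{\mathrm{loc}}$ and $\Delta h$ over the inward classes for all five values of $n$ jointly, and check the three assertions.
\begin{itemize}
\item The means of $\Delta\deg$ are $1.81$ for axial inward, $2.10$ for spinal inward and $-1.23$ for framework inward edges, after rounding.
\item The mean of $\Delta\dim_{\mathrm{loc}}$ is positive for the axial and spinal inward classes ($0.34$ and $0.33$) and negative for the framework inward class ($-0.42$).
\item The qualitative conclusion is exactly this sign contrast.
\end{itemize}

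As a built-in consistency check, the level rows must show mean $0$ for every drift. A level edge appears in both orientations within the same class, so its increments cancel exactly. This validates the orientation bookkeeping independently of the data.

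The main obstacle I expect is reproducibility of $\Sp_n$, and with it of the spinal rows, rather than any mathematical difficulty. The spine depends on a tie-breaking convention among shortest paths, and the spinal entries are sensitive to it. By contrast, the axial and framework entries depend only on canonically defined sets. I would therefore first verify the axial and framework numbers, which are convention-free. I would then confirm that the spinal sign conclusions (positive degree and dimension drift) are robust under alternative lexicographic conventions. This would show that the observation does not hinge on the particular choice of spine.
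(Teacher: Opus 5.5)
Your proposal takes the same approach as the paper: the observation is not derived from Sections 2--5 but read off from the exhaustive computation over $8\le n\le 12$ summarized in Table~\ref{tab:directional-drift}, and you propose to reproduce exactly that computation. The ingredients match: BFS distances to $\Ax_n,\Sp_n,\Fr_n$, graph degree, maximal clique size minus one, and $h$, aggregated over all oriented edges pooled across the range. Your zero-mean check for the level rows and your caution about the tie-breaking convention for $\Sp_n$ are sound additions. The robustness check over alternative spines is optional, since the observation concerns only the paper's particular spine.
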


\begin{remark}
The level classes have zero mean drift by symmetry, but their positive-increment shares remain informative. In particular, the positive degree share is about $35\%$ for axial and spinal level motion, compared with $29.6\%$ for framework level motion on the tested range.
\end{remark}

\subsection{Corridor profiles}

We next pass from individual edges to canonical corridors. Figure~\ref{fig:corridors} shows three corridor types from the same starting vertex in $G_{12}$.

\begin{figure}[t]
\centering
\includegraphics[width=0.94\textwidth]{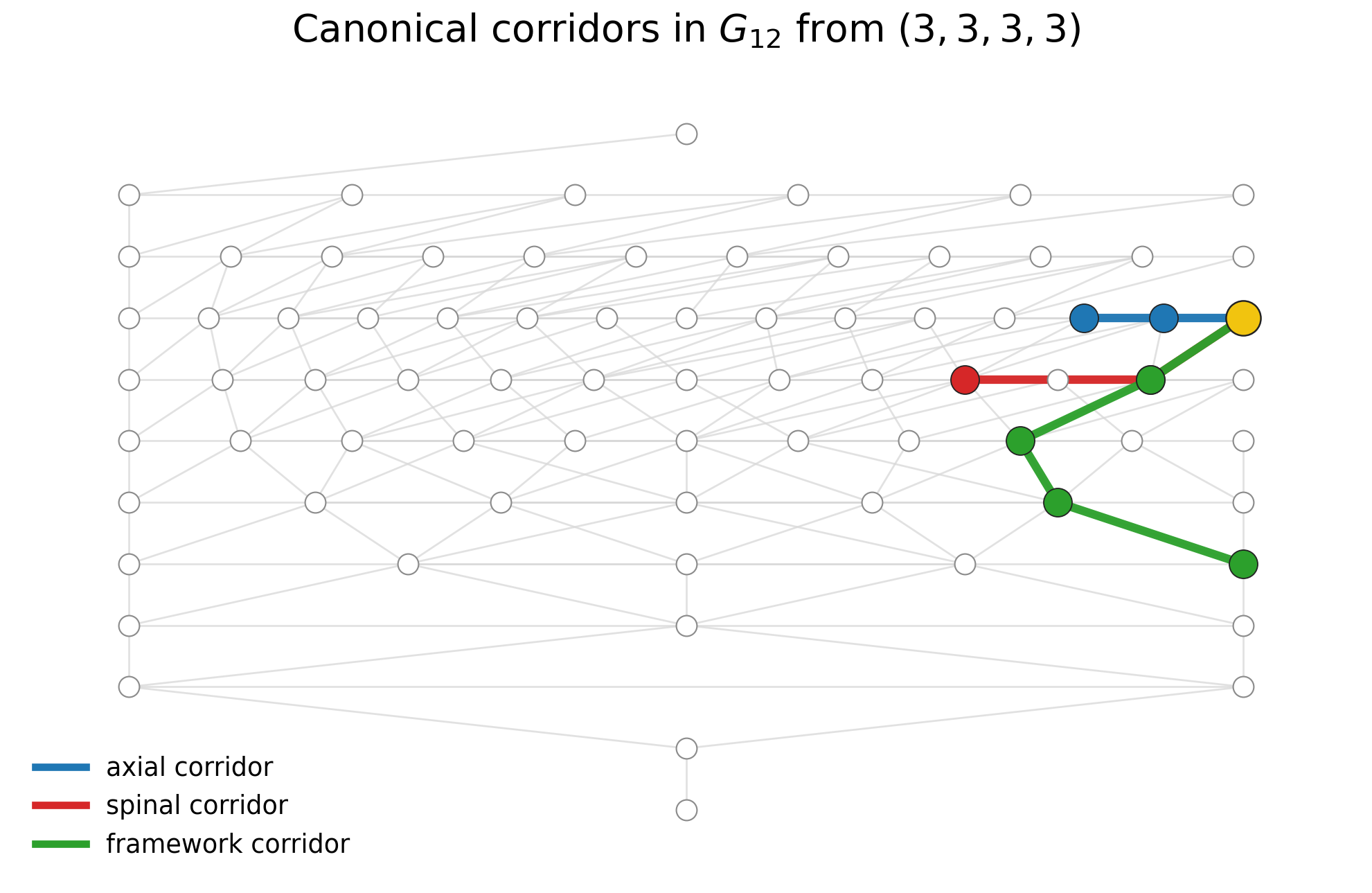}
\caption{Canonical axial, spinal, and framework corridors from the same starting vertex in $G_{12}$, defined by the lexicographically minimal descent rule.}
\label{fig:corridors}
\end{figure}

Table~\ref{tab:corridor-profiles} summarizes median hit times for several targets. The columns ``to $\Ax_n$'' and ``to $\Sp_n$'' measure the median number of corridor steps needed to hit the corresponding reference set. For the complexity-based targets, hit times are computed only over successful starts, while the accompanying success column records the proportion of starts whose canonical corridor reaches the target at all. The column ``to top degree'' records the median first-hit time for the maximum-degree zone, together with its success rate along the chosen corridor family. The final two columns do the same for the top local-simplex-dimension zone.

\begin{table}[t]
\centering
\small
\begin{tabular}{ccrrrrrr}
\toprule
$n$ & corridor type & to $\Ax_n$ & to $\Sp_n$ & to top degree & success & to top simplex & success \\
\midrule
8 & axial & 2 & 1.5 & 2 & 63.6\% & 1 & 95.5\% \\
8 & spinal & 1.5 & 1 & 1.5 & 36.4\% & 1 & 95.5\% \\
8 & framework & 0 & 0 & 0 & 4.5\% & 0 & 36.4\% \\
10 & axial & 2 & 2 & 1 & 57.1\% & 0 & 100.0\% \\
10 & spinal & 1 & 2 & 1 & 31.0\% & 0 & 100.0\% \\
10 & framework & 0.5 & 0 & 0 & 2.4\% & 0 & 57.1\% \\
12 & axial & 2 & 2 & 2 & 51.9\% & 1 & 63.6\% \\
12 & spinal & 2 & 2 & 2 & 22.1\% & 1 & 53.2\% \\
12 & framework & 1 & 1 & 0.5 & 2.6\% & 0 & 14.3\% \\
\bottomrule
\end{tabular}
\caption{Corridor access statistics for selected geometric and complexity targets.}
\label{tab:corridor-profiles}
\end{table}

\begin{observation}\label{obs:corridor-contrast}
Canonical corridors toward different reference sets are visibly non-coincident, both geometrically and combinatorially. In particular, framework corridors remain strongly boundary-confined: in Table~\ref{tab:corridor-profiles} their success rate for the top-degree zone is below $5\%$ at each of $n=8,10,12$, while axial corridors reach that zone with success rates between $51.9\%$ and $63.6\%$.
\end{observation}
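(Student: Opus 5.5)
The statement is an empirical observation about the tested range, so I would justify it by combining one strict ingredient with a reproducible computation. It splits into a qualitative claim, that canonical corridors toward different reference sets are non-coincident, and a quantitative claim about the success rates for the top-degree zone. I would treat them separately.

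For non-coincidence I would first note what the strict theory already gives. By Corollary~\ref{cor:distinct-not-equivalent}, the fields $\sigma_{\Ax}$, $\sigma_{\Sp}$, $\sigma_{\Fr}$ are pairwise distinct. This alone does not force the \emph{canonical} corridors (lexicographically minimal descent) from a given start to differ, since two different fields may still pick the same lexicographically smallest descending neighbor. So the strict part only supplies the possibility of divergence. The actual non-coincidence must be exhibited by a witness. I would take the starting vertex of Figure~\ref{fig:corridors} in $G_{12}$ and list its three canonical corridors explicitly, checking at each step via the computed distance fields that the chosen neighbor is the lexicographically smallest one decreasing the relevant $d_S$ by one. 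It then suffices to point to the first step where two corridors choose different neighbors. The combinatorial non-coincidence is witnessed by the differing rows of Table~\ref{tab:corridor-profiles}, for example different median hit times to $\Ax_n$ at $n=8$.

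For the quantitative part I would recompute, for each $n\in\{8,10,12\}$, the maximum-degree zone and the canonical axial and framework corridors from every start. I would record whether each corridor meets the zone, and then read off the proportions. From the table, the framework success rates are $4.5\%$, $2.4\%$, $2.6\%$, all below $5\%$. The axial rates are $63.6\%$, $57.1\%$, $51.9\%$, which lie between $51.9\%$ and $63.6\%$. These are finite checks.

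The main obstacle is not mathematical but one of reproducibility. The spine depends on the lexicographic tie-breaking in its definition, and ``top-degree zone'' needs a precise convention (vertices attaining the maximum degree). Both must be fixed exactly as in the computations, or the percentages may shift. I would therefore state these conventions explicitly before citing the numbers. I would also phrase ``boundary-confined'' only as the informal interpretation of the low success rates, not as a proved property.
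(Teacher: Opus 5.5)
Your proposal is correct and takes the same approach as the paper, which supports this observation only through Figure~\ref{fig:corridors} and the entries of Table~\ref{tab:corridor-profiles}; your readings ($4.5\%$, $2.4\%$, $2.6\%$ for framework corridors and $63.6\%$, $57.1\%$, $51.9\%$ for axial corridors) are exactly those entries. One small point: non-coincidence of the corridor \emph{families}, though not from every start, actually follows strictly, since $(n)\in\Fr_n\setminus\Ax_n$ has a trivial framework corridor but an axial corridor of positive length $d_{\Ax}((n))\ge 1$.
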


\begin{observation}\label{obs:interior-access}
In the tested range, corridors directed toward axial and spinal neighborhoods provide more direct access to structured interior regions than framework corridors. This is visible both in Figure~\ref{fig:corridors} and in the corridor statistics: axial and spinal corridors always reach the corresponding central neighborhoods by construction, whereas framework corridors reach top-complexity zones only rarely.
\end{observation}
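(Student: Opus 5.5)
The statement mixes a strict component with an empirical one, and I would treat them separately. The strict component is the claim that axial and spinal corridors \emph{always} reach the corresponding central neighborhoods. The empirical component is the claim that framework corridors reach the top-complexity zones only rarely. I would first isolate the strict part and derive it entirely from Section~5. Then I would support the comparative part by reading off the computations in Table~\ref{tab:corridor-profiles} and Figure~\ref{fig:corridors}, stating clearly that this part is restricted to $8\le n\le 12$.

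For the strict part, fix $S\in\{\Ax_n,\Sp_n\}$ and a radius $r\ge 0$. The canonical corridors of Section~6 are built by the lexicographically minimal descent rule. Each step chooses a neighbor with $d_S$ decreased by one, and such a neighbor exists by Lemma~\ref{lem:descent-neighbor}. So every canonical corridor is an $S$-monotone inward path. It never stalls before reaching $S$, since $d_S$ strictly decreases and Lemma~\ref{lem:descent-neighbor} applies at every vertex with $d_S>0$. By Lemma~\ref{lem:monotone-length} it is therefore an $S$-directional corridor. Proposition~\ref{prop:reach-neighborhood} then shows that its initial segment of length $\max\{d_S(\lambda)-r,0\}$ enters $N_S^{(\le r)}$. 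Proposition~\ref{prop:controlled-access} converts this into access to any $S$-controlled region of radius $r$. This gives the ``always, by construction'' assertion for every starting vertex, including the $100\%$ hit rate toward $\Ax_n$ and $\Sp_n$ in the columns ``to $\Ax_n$'' and ``to $\Sp_n$''.

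For the comparative part, I would make precise what ``structured interior regions'' means: the maximum-degree zone and the top local-simplex-dimension zone used in Table~\ref{tab:corridor-profiles}. I would then compare success rates row by row. For the top-degree zone, axial corridors succeed in $51.9\%$--$63.6\%$ of starts and spinal corridors in $22.1\%$--$36.4\%$, against at most $4.5\%$ for framework corridors. For the top-simplex zone, axial and spinal corridors succeed in at least $53.2\%$ of starts, against at most $57.1\%$ for framework corridors, and the framework advantage never appears. I would add the mechanism visible in Figure~\ref{fig:corridors} and in Observation~\ref{obs:drift-contrast}: framework-inward edges have negative mean degree drift and negative mean simplex-dimension drift, so framework descent moves away from high-complexity zones.

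The main obstacle is that ``more direct access'' is not a theorem and cannot be proved in general. The computed superiority is only a finite check, and at $n=10$ the framework corridors' $57.1\%$ top-simplex success rate is not small. I would therefore phrase the conclusion carefully. The guaranteed-access claim rests on Propositions~\ref{prop:reach-neighborhood} and~\ref{prop:controlled-access}. The ``rarely'' claim is asserted only for the top-degree zone without qualification, and for the top-simplex zone only as a relative comparison within the tested range.
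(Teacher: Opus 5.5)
Your proposal follows the paper's own justification. The ``by construction'' clause is exactly Lemma~\ref{lem:descent-neighbor}, Lemma~\ref{lem:monotone-length} and Proposition~\ref{prop:reach-neighborhood} applied to the lexicographic descent rule, and the comparative clause is a row-by-row reading of Table~\ref{tab:corridor-profiles} and Figure~\ref{fig:corridors}. Your caveat is also accurate: the framework top-simplex success rates $36.4\%$ and $57.1\%$ at $n=8,10$ do not support ``only rarely''. The data, like Observation~\ref{obs:corridor-contrast}, justify that claim only for the top-degree zone.

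There are some small slips.
\begin{enumerate}[label=(\roman*)]
\item The columns ``to $\Ax_n$'' and ``to $\Sp_n$'' report median hit times, not hit rates.
\item Construction guarantees only that axial corridors hit both sets (as $\Ax_n\subseteq\Sp_n$) and that spinal corridors hit $\Sp_n$. A spinal corridor can stop at a non-axial vertex of $\Sp_n$.
\item Proposition~\ref{prop:controlled-access} gives access to $N_S^{(\le r)}$, not to the controlled set $X$ itself. This is why the strict part says nothing about hitting the top-complexity zones.
\end{enumerate}
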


\subsection{Summary of empirical anisotropy}

The computations support the following qualitative picture. First, the canonical directional fields are visibly non-coincident, both edgewise and pathwise. Second, local complexity drift depends on the directional regime: axial and spinal inward motion tend to move toward higher-complexity regions, while framework inward motion tends to move away from them. Third, access to structured interior regions is directionally uneven: corridor families relative to the axis, the spine, and the framework do not play interchangeable roles.

These observations do not replace the strict theory of Sections~2--5. Rather, they show how that theory manifests itself concretely in finite partition graphs and provide an initial measured atlas of anisotropy in $G_n$.

\section{Conclusion and open problems}

We developed a directional formalism for the partition graph $G_n$ based on graph distance from canonical reference sets. This formalism yields inward, outward, and level edge classes, shell decompositions, and monotone inward geodesics, which we interpret as directional corridors. The central structural result is the non-equivalence theorem: distinct nonempty reference sets induce distinct directional fields. Together with the intrinsic acyclic height orientation, this gives a first precise formal meaning to the statement that the partition graph is anisotropic.

A second, conceptual contribution of the paper is the controlled-access principle. It shows that any region known to lie in a bounded neighborhood of the axis, the spine, or the framework automatically inherits corresponding directional corridors. In this way, earlier morphological results can be read in directional terms without being reproved here.

The computational atlas complements the strict theory. It does not prove universal optimality statements, but it shows that the directional language introduced in this paper captures visible and measurable structure already in the tested range of values of $n$.

We conclude with several open problems.

\begin{problem}[Quantitative anisotropy]
Develop numerical invariants measuring the extent of anisotropy in $G_n$. Natural candidates include the mixed-edge share, divergences between directional drift profiles, and distances between corridor families associated with different reference sets.
\end{problem}

\begin{problem}[Directional access to complexity]
Determine whether high-complexity regions of $G_n$, such as high-degree zones or top simplex layers, are asymptotically more accessible through certain directional regimes than through others.
\end{problem}

\begin{problem}[Compatibility of directional fields]
Study the existence of paths that are simultaneously well behaved with respect to several reference sets. For example, when can a path be spinal-inward while also nonincreasing in axial distance?
\end{problem}

\begin{problem}[Rear-directed geometry]
Clarify whether rear-directed transport defines a genuinely new directional field or whether it can be reduced to the existing axis/spine/framework language.
\end{problem}

\begin{problem}[Directional morphology across $n$]
Extend the present fixed-$n$ theory to the comparative-growth picture across $n$. How do directional fields behave under natural overlays $G_n \to G_{n+k}$? Which directional patterns stabilize, and which remain local to small or intermediate values of $n$?
\end{problem}

\begin{problem}[Toward a discrete transport theory]
Develop a broader framework in which directional corridors, shell crossings, and controlled neighborhoods are treated as parts of a discrete transport geometry on $G_n$.
\end{problem}

\section*{Acknowledgements}

The author acknowledges the use of ChatGPT (OpenAI) for discussion, structural planning, and editorial assistance during the preparation of this manuscript. All mathematical statements, proofs, computations, and final wording were checked and approved by the author, who takes full responsibility for the contents of the paper.

\end{document}